\newcommand{\codim}{\operatorname{codim}}
\newcommand{\diag}{\operatorname{diag}}
\newcommand{\reg}{\operatorname{reg}}
\newcommand{\charact}{\operatorname{char}}
\newcommand{\im}{\operatorname{Im}}
\newcommand{\Irr}{\operatorname{Irr}}
\newcommand{\Ind}{\operatorname{Ind}}
\newcommand{\Id}{\operatorname{Id}}
\newcommand{\N}{\mathbb{N}}
\newtheorem{teorema}{Teorema}[section]
\theoremstyle{definition}
\newtheorem{definition}[teorema]{Definition}
\newtheorem{lemma}[teorema]{Lemma}
\newtheorem{proposition}[teorema]{Proposition}
\newtheorem{proposition/definition}[teorema]{Proposition/Definition}
\newtheorem{remark}[teorema]{Remark}
\theoremstyle{remark}
\title{Jordan classes and Lusztig strata in disconnected reductive groups}
\author{Martina Costa Cesari}
\author{Martina Costa Cesari\\
	Dipartimento di Matematica ``Tullio Levi-Civita''\\
	Torre Archimede - via Trieste 63 - 35121 Padova - Italy\\
	email: martina.costacesari@math.unipd.it}
\date{}
\begin{document}
	\maketitle
	
	\begin{abstract}
		Let $G$ be a  non-connected reductive algebraic group over an algebraically closed field $\mathbb{K}$ and let $D$ be a connected component of $G$. We investigate Jordan classes of $D$ and we obtain a description of the regular part of the closure of a Jordan class in terms of induction of $G^{\circ}$-orbits. We use this result to show that Lusztig strata in a non-connected reductive algebraic group are locally closed.
	\end{abstract}
	\begin{section}*{Introduction}
		Reductive non-connected groups appear frequently in the study of algebraic groups, for example as centralizers of semisimple elements in non-simply connected semisimple groups. Let $G$ be a non-connected reductive algebriac group over an algebraically closed field $\mathbb{K}$ of arbitrary characteristic and 
		let $D$ be a connected component of $G$. In \cite{lusztig2020strata} G. Lusztig defined a partition of $D$ in finitely many strata, generalizing the partition defined in \cite{Lusztig2015} for $G^{\circ}$. 
		There is an action of $G/G^{\circ}$ on the Weyl group $W$: we will denote by $W^{D}$ the fixed point set for the action of $D$ on $W$. The subgroup $W^D$ is a Weyl group.
		Using a variant of the Springer's reprsentation, it possible to define a map $E$ from $D$ to the set of irreducible representations of $W^D$.
		Lusztig's strata are the fibers of $E$.
		
		It was proved in \cite{carno} that the strata in $G^{\circ}$ are locally closed.  G. Lusztig suggested in \cite{lusztig2020strata} that also the strata of $D$ are locally closed.
		In this paper we prove this assertion. 	From the definition of strata it is not immediate that they have geometrical properties, so thanks to this result, we can treat them as geometrical objects. For this purpose, we study the partition of $D$ into Jordan classes, i.e. the equivalence classes defined in \cite{charactersheafI}. Strata of $D$ are union of finitely many Jordan classes of elements whose $G^{\circ}$-orbits have same dimension \cite{lusztig2020strata}.
		We show that a stratum is a union of the regular part of the closure of Jordan classes. For this reason we need to study Jordan classes and their closures.\\
		Jordan classes for $G^{\circ}$ were defined in  \cite{intersectioncohom}. Our aim is to generalize some properties of Jordan classes of $G^{\circ}$ to Jordan classes of $G$. We describe a procedure of induction of an orbit from a connected component of the normalizer of a Levi subgroup of $G^{\circ}$ to a $G^{\circ}$-orbit in $D$. This allows us to investigate the closure and the regular closure of a Jordan class. In particular, similarly to the connected case (\cite{CarnovaleEsposito}), the regular closure of a Jordan class is a union of induced $G^{\circ}$-orbits.		\\
		
		The paper is structured as follows.
		In the first section we give the definition of Jordan classes and recall relevant properties. Furthermore we describe the induction of $G^{\circ}$-orbits in $D$, and investigate the regular closure of a Jordan class.
		In the second section, using the description of regular closure of Jordan classes, we prove that a stratum $X\in D$ is locally closed.\\
		The last section is devoted to examples: we describe the Jordan classes of $G=SL(n)\rtimes \langle\tau\rangle$, where $\tau$ is the exterior automorphism reversing the Dynkin diagram of type $A_{n-1}$  and $\charact \mathbb{K}=2$.

	\end{section}

	\begin{section}*{Notation}
		Let $G$ be a reductive algebraic group, not necessarily connected, and let $G^{\circ}$ be its identity connected component. If $x$ is an element of $G$, and $H$ a closed subgroup of $G$, then
		
		\begin{itemize}
			\item $H^{\circ}$ is the identity component of $H$;
			\item $N_GH$ is the normalizer of $H$ in $G$;
			\item $Z(H)$ is the center of $H$;
			\item $x_s$, $x_u$ are, respectively, the semisimple and the unipotent part of $x$;
			\item if $G$ acts on a variety $X$ and $x\in X$, we denote by $\operatorname{Stab}(x)$ the stabilizer of $x$ in $G$, i.e. $\operatorname{Stab}(x)=\{h\in G \ | \ h\cdot x=x\}$;
			when $X=G$ and the action is conjugation, we will denote  by $G^x$ or with $C_G(x)$ the centralizer of $x$ in $G$. If $H$ is a subgroup of $G$, we will denote by $H^{x}$ the centralizer of $x$ in $H$;
			\item If $X$ is a $G$-variety, then $X_{(d)}=\{x\in X \ | \ \dim (G\cdot x)=d\}$. Let $Y\subseteq X$ and let $m$ be the maximum integer such that $Y\cap X_{(m)}\neq \emptyset$. We will  denote by $Y^{\text{reg}}$ the set of regular elements of $Y$, i.e., the elements $y\in Y$ such that $\dim( G\cdot y) =m$;
			\item $D$ is a connected component of $G$.
			
		\end{itemize}
	\end{section}
	
	\begin{section}{Jordan classes}
		In this section, we recall the definition of Jordan classes and their properties \cite{charactersheafI} and study the closure and the regular closure of a Jordan class. In order to do that, we will describe the induction of $G^{\circ}$-orbits in $D$.
		
		\begin{subsection}{Preliminars on Jordan classes and isolated elements}\label{preliminars}
			In this section, we give the definition of a Jordan class and an isolated Jordan class.
			Furthermore, we recall the analogue of parabolic subgroups in the disconnected case, namely the normalizers in $G$ of a parabolic subgroup of $G^{\circ}$, and their properties.\\

			Let $a$ be an element of $G$ with Jordan decomposition $a=a_sa_u$. We set 
			$$
			T(a)=(Z((C_{G}(a_s))^{\circ})\cap C_{G}(a_u))^{\circ}=(Z((C_{G^{\circ}}(a_s))^{\circ})\cap C_{G^{\circ}}(a_u))^{\circ}
			$$
			
			We consider the equivalence relation on $G$:
			$$a \sim h \ \text{if} \  \exists x\in G^{\circ} \  \text{such that} \  T(xhx^{-1})=T(a) \ \text{and}  \ xhx^{-1}a^{-1}\in T(a).$$
			A Jordan class is an equivalence class for this relation, and we denote the Jordan class containing $a$ by $J(a)$.
			
			By \cite[Corollary 9.4]{steinberg}, $C_{G^{\circ}}(a_s)^{\circ}$ is a reductive group, so $Z(C_{G^{\circ}}(a_s))^{\circ}$ is a torus. Hence $T(a)$ is a closed connected subgroup of a torus, so $T(a)$ is a torus. \\

			\begin{remark}\label{sameunipotentpart}
				By construction elements in a Jordan class, up to conjugation, have the same unipotent part. In fact, if $a\sim h$, then, up to conjugation, $h\in T(a)a$ with $T(h)=T(a)$ so $h=za$, where $z\in T(h)$ is a semisimple element, because $T(h)$ is a torus, and it commutes with $h_s,h_u,a_s$ and $a_u$, so $h_s=za_s$ and $h_u=a_u$.
				
				\begin{remark}\label{regpartofsemisimple}
					Let $h\in T(a)a$. By Remark \ref{sameunipotentpart}, $h_s=za_s\in T(a)a_s$, with $z\in T(a)\subset Z(C_G(a_s)^{\circ})$. Let  $x\in C_G(a_s)^{\circ}$. Then $z$ commutes with $x$, hence $h_s=za_s=zxa_sx^{-1}=xza_sx^{-1}=xh_sx^{-1}$. 
					Thus $C_G(a_s)^{\circ}\subset C_G(h_s)^{\circ}$. So $\dim(C_G(a_s)^{\circ}) \leq \dim(C_G(h_s)^{\circ})$ for all $h\in T(a)a_s$. Therefore
					$$(T(a)a_s)^{\reg}=\{h_s\in T(a)a_s \ | \ C_G(h_s)^{\circ}=C_G(a_s)^{\circ} \}.$$
				\end{remark}
				\begin{remark}\label{centsemisuguali}
					Let $J$ be a Jordan class, and $a=a_sa_u, \ h=h_sh_u\in J$. Then, up to conjugation,  $C_G(h_s)^{\circ}=C_G(a_s)^{\circ}$.
					Indeed, by definition of Jordan classes, up to conjugation,  $h\in T(a)a$ and $a\in T(h)h$, so by Remark \ref{regpartofsemisimple}, $C_G(a_s)^{\circ}= C_G(h_s)^{\circ}$.	
				
					Also, if $a\sim h$ then $C_G(a)^{\circ}$ and $C_G(h)^{\circ}$ are $G^{\circ}$-conjugated (see \cite[Lemma 3.4]{charactersheafI}). As a consequence, $J\subset G_{(d)}$ for some $d$. 
				\end{remark}

				By Remark \ref{sameunipotentpart} and Remark \ref{centsemisuguali} the semisimple parts of the elements in the Jordan class $J(a)$ are contained in $G^{\circ}\cdot(T(a)a_s)^{\text{reg}}.$\\
				
				We denote 
				\begin{equation}\label{Jordanreg}
					(T(a)a)^{\bullet}=\{h\in T(a)a \ | \ T(h)=T(a) \}.
				\end{equation}

				\begin{lemma}\label{regpart}
					With notation as above there holds
					$$
					(T(a)a)^{\bullet}=(T(a)a_s)^{\text{reg}}a_u
					$$
				\end{lemma}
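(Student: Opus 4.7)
The plan is to reduce the claimed identity to the characterization of $(T(a)a_s)^{\reg}$ given by Remark~\ref{regpartofsemisimple}. Since $T(a) \subseteq Z(C_G(a_s)^{\circ}) \cap C_G(a_u)$ is a torus commuting with both $a_s$ and $a_u$, the same computation carried out in Remark~\ref{sameunipotentpart} shows that every $h = ta \in T(a)a$ has Jordan decomposition $h_s = ta_s$, $h_u = a_u$. Consequently the condition $h \in (T(a)a_s)^{\reg}a_u$ is equivalent to $h_s \in (T(a)a_s)^{\reg}$, and by Remark~\ref{regpartofsemisimple} this is in turn equivalent to $C_G(h_s)^{\circ} = C_G(a_s)^{\circ}$. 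The problem therefore reduces to proving the equivalence
\begin{equation*}
T(h) = T(a) \iff C_G(h_s)^{\circ} = C_G(a_s)^{\circ}
\end{equation*}
for $h \in T(a)a$.

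The $(\Leftarrow)$ direction is immediate: equal identity components of the centralizers yield equal centres, and since $h_u = a_u$, plugging into the definition of $T(h)$ produces $T(a)$ at once.

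For $(\Rightarrow)$ I already have the inclusion $C_G(a_s)^{\circ} \subseteq C_G(h_s)^{\circ}$ from Remark~\ref{regpartofsemisimple}; only the reverse inclusion needs to be established. Writing $h_s = ta_s$ with $t \in T(a)$, by hypothesis $t \in T(a) = T(h) \subseteq Z(C_G(h_s)^{\circ})$, the last containment being built into the definition of $T(h)$. Any $g \in C_G(h_s)^{\circ}$ then commutes with $t$ (centrally) and with $h_s$, hence with $a_s = t^{-1}h_s$, giving $C_G(h_s)^{\circ} \subseteq C_G(a_s)$ and so $C_G(h_s)^{\circ} \subseteq C_G(a_s)^{\circ}$.

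The only non-formal move is the use of $t \in Z(C_G(h_s)^{\circ})$; this is precisely the point where the hypothesis $T(h) = T(a)$ is exploited, and I do not anticipate any further obstacle.
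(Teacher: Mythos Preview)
Your proof is correct and follows essentially the same route as the paper. The paper's argument for $(\Rightarrow)$ cites Remark~\ref{centsemisuguali}, which in turn applies Remark~\ref{regpartofsemisimple} with the roles of $a$ and $h$ swapped (using $a\in T(h)h$); unwinding that citation gives precisely your direct computation that $t\in T(h)\subseteq Z(C_G(h_s)^\circ)$ forces $C_G(h_s)^\circ$ to centralize $a_s=t^{-1}h_s$.
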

				\begin{proof}
					Let $h\in T(a)a$ such that $T(h)=T(a)$. Thus $h=za_sa_u$ with $z\in T(a)$. By definition of $T(a)$, $z$ is semisimple and commutes with $a_s$ and $a_u$, so $h_s=za_s$ and $h_u=a_u$.
					By Remark \ref{centsemisuguali}, $C_G(h_s)^{\circ}=C_G(a_s)^{\circ}$, so, by Remark \ref{regpartofsemisimple}, $x\in (T(a)a_s)^{\text{reg}}a_u$.\\
					Conversely, let $y\in (T(a)a_s)^{\text{reg}}a_u$. Then $y_s=xa_s$ and $y_u=h_u$ with $C_G(xa_s)^{\circ}=C_G(a_s)^{\circ}$. So 
					
					$$
					Z((C_{G}(xa_s)^{\circ})=Z((C_{G}(a_s)^{\circ}),
					$$
					thus
					$$
					T(y)=(Z((C_{G}(xa_s))^{\circ})\cap C_{G}(a_u))^{\circ}=(Z((C_{G}(a_s))^{\circ})\cap C_{G}(a_u))^{\circ}=T(a).
					$$
					and  $y\in (T(a)a)^{\bullet}$.
				\end{proof}
				By Lemma \ref{regpart}, we have
				\begin{equation*}\label{semisimplepartreg}
					J(a)=G^{\circ}\cdot ((T(a)a_s)^{\text{reg}}a_u).
				\end{equation*}
				
				We set
				$$L(a) = C_{G^{\circ}}(T(a)).$$
				
				Since $L(a)$ is a centralizer of the torus $T(a)$, it is a Levi subgroup of $G^{\circ}$. Hence, there exists a parabolic subgroup $P$ of $G^{\circ}$ that has $L(a)$ as a Levi subgroup. Furthermore one may choose $P$ so that $a\in N_GP$ (\cite[2.1 (a)]{charactersheafI}).
			\end{remark}
			
			By \cite[2.2]{charactersheafI} the following conditions for $a \in G$ are equivalent:
			\begin{enumerate}[(i)]
				\item \label{i} $L(a)=G^{\circ}$;
				\item $T(a) \subset Z(G^{\circ})$
				\item $T(a)=Z(G^{\circ})\cap C_G(a)$;
				\item there is no proper parabolic subgroup $P$ of $G^{\circ}$ with a Levi subgroup $L$ such that $a \in N_{G} P \cap$ $N_{G} L$ and $C_{G}\left(a_{s}\right)^{\circ} \subset L$
				\item there is no proper parabolic subgroup $P$ of $G^{\circ}$ such that $a \in N_{G} P$ and $C_{G}\left(a_{s}\right)^{\circ} \subset P$.
			\end{enumerate}

			An element $a\in G$ satisfying any of the above conditions is called isolated. An isolated Jordan class is a Jordan class in which every element or equivalently some element is isolated (\cite[3.3]{charactersheafI}).
			By \cite[2.2 (b)]{charactersheafI}, the element $a\in G$ is isolated in $N_G(L(a))$, so, by (\ref{i}), $L(a)=(N_{G}(L(a)))^{\circ}$.\\
			\begin{remark}\label{unipotent jordan class}
				It can be useful to notice that	if $a=a_u$, then $T(a)=(Z(G^{\circ})\cap C_G(a))^{\circ}$. So $J(a)$ is isolated and it is a $G^{\circ}$-orbit traslated by a subgroup of $Z(G^{\circ})$, in particular $J(a)=(Z(G^{\circ})\cap C_G(a))^{\circ}(G^{\circ}\cdot a)$.\\ 
				If $G^{\circ}$ is a simple group, then $T(a)$ is trivial. So the Jordan class of $a$ is the unipotent orbit $G^{\circ}\cdot a$.
			\end{remark}
			
			Now we recall some facts from \cite{charactersheafI} that will be useful in the sequel.
			\begin{remark}\label{proprieta}
				By construction of $T(a)$ and $L(a)$, the following properties hold:
				
				\begin{enumerate}[(a)]
					\item $C_G(a_s)^{\circ}\subset L(a)$,
					\item \label{2} $T(a)=T_{N_G(L(a))}(a)$ (\cite[2.1(d)]{charactersheafI}),
					\item \label{3} $T_{N_G(L(a))}(a)=(Z(L(a))^{\circ}\cap C_{N_G(L(a))}(a))^{\circ}$ (\cite[3.9]{charactersheafI}). 
					\item \label{T}	$T(a)=(Z(L(a))^{\circ}\cap C_{N_G(L(a))}(a))^{\circ}$, by (\ref{2}),(\ref{3}).
				\end{enumerate}
				
			\end{remark}
			Following \cite[1.22]{charactersheafI}, we prove the next proposition.
			\begin{proposition}\label{semisimpleinclosure}
				Let $H$ be an algebraic group, let $D'$ be a connected component of $H$, $h\in D'$, and $x\in \overline{H^{\circ}\cdot h}$. Then $x_s\in H^{\circ}\cdot h_s$.
			\end{proposition}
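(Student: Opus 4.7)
My plan is to reduce the statement to the standard principle that the closure of an $H^{\circ}$-orbit in an affine variety contains a unique closed orbit, which coincides with the orbit of the semisimple part.

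First, I would observe that $\overline{H^{\circ}\cdot h}\subseteq D'$, since the connected component $D'$ is both open and closed in $H$; hence the orbit closure lives in the affine $H^{\circ}$-variety $D'$. The central technical input is a contracting cocharacter argument. Given any $y\in D'$ with Jordan decomposition $y=y_s y_u$, the unipotent element $y_u$ lies in the reductive centralizer $(C_{H^{\circ}}(y_s))^{\circ}$, and therefore in the unipotent radical of some Borel subgroup of this centralizer. One may then choose a cocharacter $\lambda\colon\mathbb{G}_m\to(C_{H^{\circ}}(y_s))^{\circ}$ with $\lim_{t\to 0}\lambda(t) y_u \lambda(t)^{-1}=1$. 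Because $\lambda(t)$ commutes with $y_s$, this gives $\lim_{t\to 0}\lambda(t) y \lambda(t)^{-1}=y_s$, so $y_s\in\overline{H^{\circ}\cdot y}$. Applying this with $y=h$ yields $h_s\in\overline{H^{\circ}\cdot h}$, and applying it with $y=x$ yields $x_s\in\overline{H^{\circ}\cdot x}\subseteq\overline{H^{\circ}\cdot h}$.

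To finish, I would invoke two classical inputs adapted to the disconnected setting: (i) the $H^{\circ}$-orbit of a semisimple element inside $D'$ is closed, and (ii) the closure of an $H^{\circ}$-orbit on the affine variety $D'$ contains a unique closed $H^{\circ}$-orbit. Combining these, both $H^{\circ}\cdot h_s$ and $H^{\circ}\cdot x_s$ are closed orbits sitting inside $\overline{H^{\circ}\cdot h}$, so they must coincide; this yields $x_s\in H^{\circ}\cdot h_s$, as required.

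The main obstacle is justifying the closedness statement (i) on the non-identity component $D'$: while this is classical for the conjugation action of a connected reductive group on itself, here $H^{\circ}$ acts on $D'$ by a \emph{twisted} conjugation. A clean way to handle this is to embed the situation into the semidirect product $H^{\circ}\rtimes\langle h\rangle$, where the $H^{\circ}$-orbits on the coset $D'$ become honest conjugacy classes in a larger connected algebraic group, so that the classical results on closedness of semisimple orbits and uniqueness of the closed orbit in an orbit closure apply directly.
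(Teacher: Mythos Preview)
Your strategy differs from the paper's, and its core idea---contract to the semisimple part via a cocharacter, then use that semisimple orbits are closed together with uniqueness of the closed orbit in an orbit closure---is valid when $H^{\circ}$ is reductive, which covers every application made later in the paper. By contrast, the paper argues more directly: it fixes an embedding $H\subset GL(n)$, lets $Y\subset GL(n)$ be the semisimple class containing $h_s$ and $Y'=\{a\in GL(n):a_s\in Y\}$ the (closed) locus of matrices with that characteristic polynomial, observes that $a\mapsto a_s$ is a morphism $\rho\colon Y'\to Y$, and notes that $C=\rho^{-1}(H^{\circ}\cdot h_s)\cap D'$ is closed because $H^{\circ}\cdot h_s$ is. Since $H^{\circ}\cdot h\subset C$, also $\overline{H^{\circ}\cdot h}\subset C$. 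This bypasses both the contraction step and any appeal to uniqueness of closed orbits.

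As written, your argument has two concrete gaps. First, the group $H^{\circ}\rtimes\langle h\rangle$ is \emph{not} connected: the image of $h$ in $H/H^{\circ}$ has finite order, so this semidirect product is a finite disjoint union of cosets of $H^{\circ}$. Thus your proposed reduction does not place you inside a connected group, and your justification of (i) does not go through (though (i) itself is true and is exactly what the paper cites from Lusztig). Second, the proposition is stated for an arbitrary algebraic group $H$, but several of your steps tacitly assume $H^{\circ}$ reductive. The contraction step already fails for $H=\mathbb{G}_a$: there $h_s=1\notin\overline{H^{\circ}\cdot h}=\{h\}$ whenever $h\neq 1$, so the asserted inclusion $h_s\in\overline{H^{\circ}\cdot h}$ is false, and your phrase ``the reductive centralizer $(C_{H^{\circ}}(y_s))^{\circ}$'' presupposes what is not given. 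Likewise, the uniqueness statement (ii) is a GIT fact whose standard proof uses reductivity of the acting group; you offer no argument for it otherwise. If you add the hypothesis that $H^{\circ}$ is reductive, your argument becomes correct (and you may simply cite closedness of semisimple $H^{\circ}$-orbits on $D'$ rather than the semidirect-product detour), but it does not establish the proposition in the generality stated.
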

			\begin{proof}
				Let $C=\{y\in D' \ | \ y_s\in H^{\circ}\cdot h_s \}$ and let  $(H^{\circ}\cdot h)_s=\{f_s \ | \ f\in H^{\circ}\cdot h\}$. We observe that $(H^{\circ}\cdot h)_s$ is the orbit $H^{\circ}\cdot h_s$, so, by \cite[1.4 (e)]{charactersheafI}, is closed. Let, for some $n$, $H\subset GL(n)$ be an embedding of algebraic groups. Let $Y$ be the semisimple class of $GL(n)$ containing $H^{\circ}\cdot h_s$ and let $Y'=\{a\in GL(n) \ | \ a_s \in Y\}$. Then $Y'$ is the set in $GL(n)$ of the matrices that have characteristic polynomial equal to that of a fixed matrix in $Y$, so $Y'$ is closed. We consider the morphism
				$$
				\rho:Y'\longrightarrow Y
				$$
				$$
				a\mapsto a_s.
				$$
				Then $Y'\cap H=\rho^{-1}(H)$ is also closed in $H$ and the restriction of $\rho$ from $Y'\cap H$ to $Y\cap H$ is a morphism of varieties. Hence $C=\rho^{-1}(H^{\circ}\cdot h_s)\cap D'$ is closed in $D'$. So $H^{\circ}\cdot h\subset C$ implies $\overline{H^{\circ}\cdot h}\subset C$.

			\end{proof}

			Let $Q=MU_Q$ be the Levi decomposition of a parabolic subgroup $Q\subset G^{\circ}$. Note that
			$U_Q$ is normalized by $N_GQ$ because $U_Q$ is characteristic in $Q$. Furthermore $U_Q$ acts simply transitively on Levi subgroups of $Q$, hence $U_Q\cap (N_GQ\cap N_GM)={1}$ and the group $(N_GQ\cap N_GM)U_Q$ is isomorphic to the semidirect product of $N_GQ\cap N_GM$ and $U_Q$. By standard arguments, for all $x\in N_GQ\cap N_GM$ the coset $xU_Q$ is $U_Q$-stable.\\
	
			Following the proof of \cite[Proposition 3.15]{charactersheafI}, we show the next proposition.
			\begin{proposition}\label{semisimplepart}
				Let $Q$ be a parabolic subgroup of $G^{\circ}$ with Levi decomposition $Q=MU_Q$. Let $h\in N_GQ\cap N_GM$. Then the semisimple parts of the elements in $hU_Q$ are all $U_Q$-conjugate.
			\end{proposition}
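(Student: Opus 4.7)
The plan is to reduce the proposition to a statement about semisimple elements in a single coset and then settle that statement via the conjugacy of diagonalizable subgroups in a solvable subgroup of $N_GQ$. First I would sharpen the preamble by noting that in fact $N_GQ=(N_GQ\cap N_GM)\,U_Q$: for $g\in N_GQ$, the conjugate $gMg^{-1}$ is a Levi subgroup of $Q$, so the simple transitivity of $U_Q$ on such Levis yields $u\in U_Q$ with $u^{-1}g\in N_GM$. Combined with $U_Q\cap(N_GQ\cap N_GM)=\{1\}$ this presents $N_GQ$ as a semidirect product and produces a morphism of algebraic groups $\pi\colon N_GQ\to N_GQ\cap N_GM$ with kernel $U_Q$. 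Since morphisms of algebraic groups preserve Jordan decompositions, $\pi((hu)_s)=\pi(hu)_s=\pi(h)_s=h_s$ for every $u\in U_Q$, whence $(hu)_s\in h_sU_Q$.

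It therefore suffices to prove that every semisimple element of $h_sU_Q$ is $U_Q$-conjugate to $h_s$. Given such an $s$, I would form the Zariski closures $D_h:=\overline{\langle h_s\rangle}$ and $D_s:=\overline{\langle s\rangle}$, both diagonalizable subgroups of $N_GQ$ meeting $U_Q$ trivially, and consider the closed solvable subgroup $H:=D_h\,U_Q$. Since $s\in h_sU_Q\subseteq H$, one has $D_s\subseteq H$ as well, and inside $H$ both $D_h$ and $D_s$ are Levi complements to the normal unipotent subgroup $U_Q$: each is reductive, meets $U_Q$ trivially, and maps isomorphically onto $H/U_Q$ via $\pi$. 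I would then appeal to the conjugacy of such complements by the unipotent radical to obtain $v\in U_Q$ with $vD_hv^{-1}=D_s$; since $vh_sv^{-1}$ and $s$ both lie in $D_s$ and share $\pi$-image $h_s$, and since $\pi|_{D_s}$ is injective, one gets $s=vh_sv^{-1}$, as required.

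The step I expect to be the main obstacle is the $U_Q$-conjugacy of $D_h$ and $D_s$ inside the possibly disconnected solvable group $H$: the textbook Mostow-type statements are usually phrased in the connected setting, so some care is needed---either by isolating the identity component of $D_h$ and handling the finite component group separately, or by a direct argument based on the surjectivity of the Lang-type map associated with the semisimple automorphism of $U_Q$ induced by conjugation by $h_s$.
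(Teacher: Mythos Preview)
Your argument is correct and the acknowledged gap is fillable, but the route differs from the paper's. Both proofs use the projection $\pi$ onto $N_GQ\cap N_GM$ and the fact that it preserves Jordan parts; the difference lies in how one shows that the semisimple element $a_s$ can be $U_Q$-conjugated into $N_GQ\cap N_GM$. The paper does not pass through an auxiliary solvable group at all: it simply invokes \cite[Proposition~1.6]{dignemichel}, which says that any semisimple element of $N_GQ$ normalizes \emph{some} Levi subgroup $M'$ of $Q$; since $U_Q$ acts simply transitively on the Levis of $Q$, a single $U_Q$-conjugation (which keeps $a$ inside $hU_Q$) brings $a_s$ into $N_GQ\cap N_GM$, and then $\pi(a_s)=a_s$ forces $a_s=h_s$. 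Your approach instead reduces to the $U_Q$-conjugacy of two diagonalizable complements $D_h,D_s$ to $U_Q$ inside $H=D_hU_Q$; this holds in all characteristics because diagonalizable groups are linearly reductive (so $H^1(D_h,V)=0$ for each abelian subquotient $V$ of the descending central series of $U_Q$, and one climbs the filtration), and both of your suggested fixes lead to essentially that computation. The payoff of the paper's approach is brevity---one citation replaces the entire complement-conjugacy discussion---while yours is more self-contained and makes no appeal to the Digne--Michel structural result.
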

			\begin{proof}
				
				Let $a=hu\in hU_Q$ and let $a=a_sa_u$ be its Jordan decomposition. Then, by the properties of Jordan decomposition, $a_s\in N_GQ$ and by \cite[Proposition 1.6]{dignemichel} it normalizes a Levi subgroup $M'$ of $Q$. Now, $hU_Q$ is $U_Q$-stable, so, since $M'$ is $U_Q$-conjugated to $M$, conjugating $a$ by some element in $U_Q$, we may assume $a_s\in N_GQ\cap N_GM$.
				Let $\pi$ be the projection of the semidirect product $\left(N_{G} Q \cap N_{G} M\right) U_{Q}$ onto $N_{G} Q\cap N_{G} M$ (a homomorphism of algebraic groups). Then $h=\pi(a)$ and $h_{s}=\pi\left(a\right)_s=\pi(a_s) .$ Since $a_{s} \in N_{G} Q \cap N_{G} M,$ we have $\pi\left(a_{s}\right)=a_{s}$ so $a_{s}=h_{s} .$ 
				
			\end{proof}

			Let $D$ be a connected component of $G$.
			Let $Q=MU_Q$ be the Levi decomposition of a parabolic subgroup $Q$ of $G^{\circ}$ such that $N_GQ\cap N_GM\cap D\neq \emptyset$. By \cite[I]{dignemichel} $(N_{G^{\circ}}Q\cap N_{G^{\circ}}M)^{\circ}=N_{G^{\circ}}Q\cap N_{G^{\circ}}M=M$. Let $h\in N_GQ\cap N_GM\cap D$, so $D=G^{\circ}h$. Then 
			
			\begin{equation}\label{levicomponent}
				Mh\subseteq N_GQ\cap N_GM\cap G^{\circ}h=(N_{G}Q\cap N_{G}M\cap G^{\circ})h=Mh.
			\end{equation}
			
			So $N_GQ\cap N_GM\cap D=Mh$ is a connected component of $N_GQ\cap N_GM$.
			\\

			From now on $g$ will always denote an element of $D$ and its Jordan decomposition will be $su$, $L$ will always denote $L(g)$.
			We will denote by $P$ a parabolic subgroup of $G^{\circ}$ with Levi subgroup $L$ and such that $gPg^{-1}=P$, whose existence is ensured by  \cite[2.1 (a)]{charactersheafI}. By \cite[2.2 (b)]{charactersheafI} $g$ is isolated in $N_GL$.
			We will denote by $S$ the (isolated) $N_GL$-Jordan class of $g$. Then $S\subset N_GP$ (\cite[Proof of lemma 3.6]{charactersheafI}), and $S$ is an isolated $N_GP\cap N_GL$-Jordan class.\\
			We will denote the connected component $N_GP\cap N_GL\cap D=Lg$ of $N_GP\cap N_GL$ by $D_L$.
			Let $S^{*}=\{h\in S \ | \ C_G(h_s)^{\circ}\subset L \}$. By \cite[3.9]{charactersheafI}, if $h\in S^{*}$ then $T_G(h)=(Z(L)^{\circ}\cap C_L(h))^{\circ}=T_{N_GL}(h)$.

		\end{subsection}
		\begin{subsection}{Induction of orbits}\label{sectioninduction}
			In this section we recall the induction procedure of $G^{\circ}$-orbits in $G$. The connected case is described in \cite{LS}. The induction of unipotent orbits in a disconnected group is reported in \cite[II, 3]{Spaltestein}.
			
			\begin{definition}
				Let $X$ be  a $G$-variety, $H$ a subgroup of $G$, and $Y$ an $H$-variety. We set
				$$
				G\times^HY=G\times Y/\sim
				$$
				where $(a,y)\sim(a',y')$ if $\exists h\in H$ such that $ah=a'$ and $h^{-1}\cdot y=y'$. We denote the elements of $G\times^HY$ by $[(a,y)]$.
			\end{definition}
			
			There is an action of $G$ on $G\times^HY$ given by
			$$
			b*[(a,y)]=[(ba,y)] \ \ \ \text{for}\ b\in G, \ [(a,y)]\in G\times^HY.
			$$
			Let now $Y$ be a $H$-stable subvariety of $X$.
			Let\\ $Z= \left\{(aH,z)\in G/H\times G\cdot Y \ | \ a^{-1}\cdot z\in Y \right\}\subset G/H\times X$. It is a $G$-variety with action
			$$
			b* (aH,z)=(baH,b\cdot z) \ \ \ \text{for}\ b\in G, \ (aH,z)\in Z.
			$$
			The following lemma is well-known and can be proved by direct verification.
			\begin{lemma}\label{varietadiincidenza}
				Let $X$ be a $G$-variety, let $H\leq G$ and $Y$ an $H$-stable subvariety of $X$. Then 
			
				$$\psi:G\times^HY \longrightarrow Z$$
				$$
				[(a,y)]\mapsto (aH,a\cdot y).
				$$
				is a well defined $G$-equivariant isomorphism of varieties. 
			\end{lemma}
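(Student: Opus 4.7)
The plan is to exhibit an explicit two-sided inverse and check everything by direct computation, so most of the work is bookkeeping; the only part that requires a short argument is making sure both maps are actually morphisms of varieties (not just set maps).

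First I would verify that $\psi$ is well defined. If $(a,y)\sim (a',y')$ with $a'=ah$ and $y'=h^{-1}\cdot y$ for some $h\in H$, then $a'H=aH$ and $a'\cdot y'=ah\cdot(h^{-1}\cdot y)=a\cdot y$, so the assignment does not depend on the representative. The image lies in $Z$ because $a^{-1}\cdot(a\cdot y)=y\in Y$ and $a\cdot y\in G\cdot Y$. Equivariance is immediate: $\psi(b*[(a,y)])=\psi([(ba,y)])=(baH,ba\cdot y)=b*\psi([(a,y)])$.

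Next I would define the candidate inverse
\[
\phi:Z\longrightarrow G\times^H Y,\qquad (aH,z)\longmapsto [(a,a^{-1}\cdot z)].
\]
For $(aH,z)\in Z$ one has $a^{-1}\cdot z\in Y$ by the definition of $Z$, so the formula makes sense. If $a'=ah$ is another representative of $aH$, then $[(a',a'^{-1}\cdot z)]=[(ah,h^{-1}\cdot(a^{-1}\cdot z))]=[(a,a^{-1}\cdot z)]$ by the defining equivalence on $G\times^H Y$, so $\phi$ is well defined. The two compositions are then trivial: $\phi\circ\psi([(a,y)])=\phi((aH,a\cdot y))=[(a,y)]$, and $\psi\circ\phi((aH,z))=(aH,a\cdot(a^{-1}\cdot z))=(aH,z)$.

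The one step that deserves care is that $\psi$ and $\phi$ are morphisms of varieties, and this is where I would spend most of the effort. For $\psi$, the map $G\times Y\to G/H\times X$ sending $(a,y)$ to $(aH,a\cdot y)$ is the composition of the quotient $G\to G/H$ with the action $G\times Y\to X$, both morphisms, and it descends to $G\times^H Y$ because it is constant on the orbits of the $H$-action $h\cdot(a,y)=(ah^{-1},h\cdot y)$. Since $G\times^H Y$ carries the quotient variety structure, the induced $\psi$ is a morphism. For $\phi$, I would use that $\pi:G\to G/H$ admits local sections $\sigma:U\to G$ on an open cover of $G/H$; on the preimage of each such $U$ the assignment $(aH,z)\mapsto[(\sigma(aH),\sigma(aH)^{-1}\cdot z)]$ is a morphism, and it agrees with $\phi$ by the well-definedness shown above, so $\phi$ is a morphism locally, hence globally. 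Together with the already verified set-theoretic bijectivity and equivariance, this gives the desired $G$-equivariant isomorphism of varieties.
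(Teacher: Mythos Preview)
Your proof is exactly the direct verification the paper has in mind; the paper itself omits the argument entirely, stating only that the lemma is well known and can be checked directly. One small caveat worth flagging: your argument that $\phi$ is a morphism invokes Zariski-local sections of $\pi:G\to G/H$, which exist when $H$ is a parabolic subgroup (the only case the paper ever uses) but not for an arbitrary closed subgroup $H$; if you want the statement in the generality written, you should either note that \'etale-local sections suffice, or argue instead that $G\times Y\to Z$, $(a,y)\mapsto(aH,a\cdot y)$, is itself a geometric quotient by the free $H$-action and hence identifies $Z$ with $G\times^H Y$.
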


			With notation of Subsection \ref{preliminars}, observe that $\overline{(L\cdot g )}U_P$ is contained in the semidirect product\\ $(N_GP\cap N_GL)U_P$, and $\overline{L\cdot g}$ is $L$-stable, so $\overline{(L\cdot g )}U_P$ is $P$-stable. We consider
			$G^{\circ}\times^P\overline{(L\cdot g)}U_P$.

			\begin{proposition}\label{definitionofphi}
				Let $\phi$ be the following morphism:
			
				\begin{align}\label{defofphi}
					\phi : G^{\circ}\times^P \overline{L\cdot g}U_P\rightarrow G
				\end{align}
				$$
				[(x,y)]\mapsto xyx^{-1}.
				$$
				Then the image of $\phi$	is the closure of a single orbit for the action of $G^{\circ}$. 
				
			\end{proposition}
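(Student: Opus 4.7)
I plan to follow the classical Lusztig--Spaltenstein strategy for induction of orbits, suitably adapted to the disconnected setting. The proof splits naturally into four steps: (a) irreducibility of the source $W := G^{\circ}\times^P \overline{L\cdot g}U_P$, (b) closedness of the image $Y := \phi(W)$, (c) $G^{\circ}$-stability of $Y$, and (d) existence and uniqueness of an open dense $G^{\circ}$-orbit in $Y$.

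For (a), since $L$ is connected, the orbit $L\cdot g$ and its closure are irreducible. Because $\overline{L\cdot g}U_P$ sits inside the semidirect product $(N_GP\cap N_GL)\ltimes U_P$ (as recalled in equation (\ref{levicomponent}) and the discussion preceding the proposition), the multiplication map $\overline{L\cdot g}\times U_P\to \overline{L\cdot g}U_P$ is an isomorphism of varieties, so $\overline{L\cdot g}U_P$ is irreducible; then $W$, being a locally trivial fiber bundle over the irreducible base $G^{\circ}/P$ with irreducible fiber, is itself irreducible.

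For (b) and (c), Lemma \ref{varietadiincidenza} identifies $W$ equivariantly with the incidence variety $Z=\{(xP,z)\in G^{\circ}/P\times G \ : \ x^{-1}zx\in \overline{L\cdot g}U_P\}$, under which $\phi$ corresponds to the second projection. Since $G^{\circ}/P$ is projective, this projection is proper, and $Y$ is closed in $G$. The map $\phi$ is $G^{\circ}$-equivariant for the left-multiplication action on the first factor of $W$ and conjugation on $G$, so $Y$ is $G^{\circ}$-stable; combined with (a), $Y$ is closed, irreducible, and $G^{\circ}$-stable.

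Step (d) is the main obstacle: an irreducible $G^{\circ}$-stable closed subvariety of $G$ is in general only a union of orbits, so a unique dense orbit has to be extracted from the specific geometry of $\overline{L\cdot g}U_P$. My plan is to use that every element of $(L\cdot g)U_P$ is $L$-conjugate to an element of $gU_P$, and that by Propositions \ref{semisimpleinclosure} and \ref{semisimplepart} every element of $\overline{L\cdot g}U_P$ has semisimple part $G^{\circ}$-conjugate to $g_s$; thus the $G^{\circ}$-conjugacy class of semisimple parts occurring in $Y$ is pinned down. A dimension count gives $\dim W=\dim(G^{\circ}/P)+\dim \overline{L\cdot g}+\dim U_P=\dim G^{\circ}-\dim C_L(g)$. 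Comparing this with the dimension of a generic $G^{\circ}$-orbit in $Y$ via the fibers of the conjugation morphism $\mu:G^{\circ}\times\overline{L\cdot g}U_P\to G$, one shows that on an open dense subset of $W$ the orbit dimension of $\phi$-images is constant and equal to $\dim W$, so that $\mu$ is generically finite; together with the pinned-down semisimple type and the irreducibility of $Y$, this forces a unique top-dimensional $G^{\circ}$-orbit, which is then open and dense in $Y$, giving $Y=\overline{G^{\circ}\cdot v}$ for any $v$ in this orbit.
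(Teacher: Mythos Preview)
Your steps (a), (b), (c) are correct and coincide with the paper's argument: irreducibility of the source, properness of the second projection from the incidence variety (hence closedness of the image), and $G^{\circ}$-stability. The divergence is entirely in step (d).

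In the paper, once the semisimple part of every element of $\overline{L\cdot g}U_P$ is shown (via Propositions~\ref{semisimpleinclosure} and~\ref{semisimplepart}) to be $G^{\circ}$-conjugate to $s=g_s$, the argument concludes immediately: the $G^{\circ}$-orbits in $Y$ are then parametrised by unipotent classes in the relevant component of $C_G(s)$, and these are \emph{finitely many} \cite[1.15]{charactersheafI}. A closed irreducible variety carrying only finitely many orbits has a unique dense one. No dimension count is needed.

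Your step (d) replaces this by a dimension argument, and that is where the gap lies. You assert that ``on an open dense subset of $W$ the orbit dimension of $\phi$-images is constant and equal to $\dim W$'' and that $\mu$ is ``generically finite'', but you give no justification for either claim. Semicontinuity only tells you the generic orbit dimension is \emph{some} constant $d_0\le\dim Y\le\dim W$; to force $d_0=\dim Y$ you must exhibit an orbit of dimension $\dim Y$, which you never do. (For the specific $g$ with $L=L(g)$ one can in fact check $\dim G^{\circ}\cdot g=\dim W$ using $C_G(g_s)^{\circ}\subset L$, but you do not say this, and more importantly the proposition is tacitly reused in Proposition~\ref{closurejordanclass} with $zg$ in place of $g$, where $C_G((zg)_s)^{\circ}\subset L$ can fail and the induced orbit is genuinely larger than $G^{\circ}\cdot zg$.) In short, the ``pinned-down semisimple type'' observation you make is exactly the right input, but its payoff is \emph{finiteness of orbits}, not a dimension equality; your dimension count is a detour that does not close the argument on its own. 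Dropping the dimension computation and invoking finiteness of unipotent classes in $C_G(s)$ directly, as the paper does, yields a complete and more robust proof.
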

		
			\begin{proof} 
				Let $X=\left\{(x P,h) \in G^{\circ} / P \times G \ | \ x^{-1} h x \in \overline{L\cdot g} U_{P}\right\}$, and let $\pi: X\rightarrow G$ be the projection to the second factor.
				Then for $\psi:G^{\circ}\times^P \overline{L\cdot g}U_P\xrightarrow{\sim} X$ the $G^{\circ}$-equivariant isomorphism as in Lemma \ref{varietadiincidenza} we have $\pi\circ\psi=\phi$. We show that the image of $\pi$ is the closure of a $G^{\circ}$-orbit. The variety $G^{\circ}/P$ is complete because $P$ is a parabolic group of $G^{\circ}$, and $(\overline{L\cdot g})U_P$ is a closed $P$-stable subset of $N_GP$, so the map $\pi$ is proper.
				Therefore the image of $\pi$ is a closed subvariety of $G$. Furthermore $G^{\circ}\times^P \overline{L\cdot g}U_P$ is a quotient of a product of irreducible varieties, so its image through $\phi$ is irreducible.
				Since the latter is also $G^{\circ}$-stable, it is a union of $G^{\circ}$-orbits. We claim that these orbits are finitely many. Each of these orbits is represented in $\overline{L\cdot g}U_P$ by construction. So we take $x\in \overline{L\cdot g}U_P$ and let $y$ be the unique element in $\overline{L\cdot g}$ such that $x\in yU_P$. By Proposition \ref{semisimplepart} the elements $x_s$ and $y_s$ are $U_P$-conjugated, and $y_s\in \overline{L\cdot g}$. By Proposition \ref{semisimpleinclosure}, $y_s\in L\cdot s$, where $s$ is the semisimple part of $g$. Thus the $G^{\circ}$-orbits of the elements in $ \overline{L\cdot g}U_P$, are in bijection with the unipotent orbits of $C_G(s)$, and the unipotent orbits in $C_G(s)$ are finitely many (\cite[1.15]{charactersheafI}). Since the image of $\pi$ is closed, $G^{\circ}$-stable and irreducible, it is the closure of a single $G^{\circ}$-orbit.
			\end{proof}
			Then the image of $\phi$ from equation (\ref{defofphi}) is the closure of a single orbit, we call this orbit the induced orbit from $D_L$ to $D$ of the orbit $L\cdot g$, and we denote it by  
			$\Ind_{D_L}^{D }(L\cdot g).$
			By construction 
			$$\Ind_{D_L}^D(L\cdot g)=G^{\circ}\cdot (gU_P)^{\text {reg}}.$$\\
			
			Our next goal is to describe the induced orbits in terms of unipotent induced orbits. Recall that we fixed $g=su\in D$. \\
			By \cite[1.12]{charactersheafI} $P\cap G^{s\circ}$ is a parabolic subgroup of $G^{s\circ}$. Hence $P^{s\circ}=P\cap G^{s\circ}$ and it has Levi decomposition $L^{s\circ}U_{P}^{s}$. So we can consider $\Ind_{L^{s\circ}h}^{G^{s\circ}h}(L^{s\circ}\cdot h)$ for the $L^{s\circ}$-orbit of an element $h\in N_{G^s}P^{s\circ}\cap N_{G^s}{L^{s\circ}}$. Note that $G^{s\circ}h$ is the connected component of $G^{s}$ containing $h$, and $L^{s\circ}h$, as observed in (\ref{levicomponent}), is the connected component of $N_{G^s}P^{s\circ}\cap N_{G^s}{L^{s\circ}}$ containing $h$.\\
			
			We will need the following lemma.
			\begin{lemma}\label{representative of induced}
				With notation as above
				$$\Ind_{D_L}^D(L\cdot su)\cap suU_P^{s}\neq \emptyset.$$
			\end{lemma}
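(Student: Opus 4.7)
The plan is to produce, starting from an arbitrary regular element $x \in (suU_P)^{\text{reg}}$, a $U_P$-conjugate of $x$ that lies in $suU_P^s$. Since $U_P \subset G^\circ$, such a conjugate still belongs to the induced orbit $\Ind_{D_L}^D(L\cdot su) = G^\circ \cdot (suU_P)^{\text{reg}}$, and this exhibits an element of the intersection.

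The first step is to invoke Proposition \ref{semisimplepart} with $Q = P$, $M = L$, and $h = su \in N_GP \cap N_GL \cap D = D_L$: its conclusion is that every element of $suU_P$ has semisimple part $U_P$-conjugate to $(su)_s = s$. Pick $w \in U_P$ with $(wxw^{-1})_s = s$. The second step is to observe that the coset $suU_P$ is stable under $U_P$-conjugation: indeed, $su \in N_GP$ normalizes $U_P$, so for any $v \in U_P$ one has $w(su)v w^{-1} = su \cdot \bigl((su)^{-1}w(su)\bigr) v w^{-1} \in suU_P$. Hence we may write $wxw^{-1} = su v'$ with $v' \in U_P$.

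The third step is the compatibility of the Jordan decomposition with this factorization. The unipotent part $(wxw^{-1})_u = s^{-1}(wxw^{-1}) = uv'$ commutes with the semisimple part $s$. Since $u$ and $s$ commute (they are the Jordan components of $g$), it follows that $v' \in U_P \cap G^s = U_P^s$, so $wxw^{-1} \in suU_P^s$, and $wxw^{-1} \in G^\circ\cdot x \subset \Ind_{D_L}^D(L\cdot su)$, proving the claim.

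There is no real obstacle in this argument; every ingredient is already in place in Section 1. The only point that requires a moment's thought is verifying that $U_P$-conjugation keeps us inside $suU_P$, which reduces to the fact that $su$ normalizes $U_P$; everything else is a direct application of Proposition \ref{semisimplepart} combined with the uniqueness of the Jordan decomposition.
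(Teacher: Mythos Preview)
Your argument is correct and is genuinely different from the paper's. The paper proceeds by a dimension count: it considers the conjugation map $\rho\colon U_P\times suU_P^{s}\to suU_P$, computes that a generic fibre is isomorphic to $U_P^{s}$, deduces that $\overline{U_P\cdot(suU_P^{s})}=suU_P$, and then invokes Steinberg's density of regular elements to conclude that $U_P\cdot(suU_P^{s})$ meets $(suU_P)^{\mathrm{reg}}$. Your approach bypasses both the fibre-dimension computation and the appeal to Steinberg: starting from any $x\in(suU_P)^{\mathrm{reg}}$, you use Proposition~\ref{semisimplepart} to $U_P$-conjugate $x$ so that its semisimple part is exactly $s$, and then a direct Jordan-decomposition argument forces the conjugate into $suU_P^{s}$. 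In fact your reasoning shows more than the lemma asks, namely that \emph{every} element of $suU_P$ is $U_P$-conjugate to an element of $suU_P^{s}$ (equivalently $U_P\cdot(suU_P^{s})=suU_P$, not merely dense). The only implicit step you use is the standard identification $U_P\cap C_G(s)=U_P^{s}$ (the unipotent radical of $P^{s\circ}$), which the paper also uses without comment in its own proof; it follows because $U_P\cap C_G(s)$ is connected unipotent and normal in $P^{s\circ}$, while the Levi $L^{s\circ}$ meets it trivially.
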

			\begin{proof}
				We proceed as in \cite[Lemma 4.4]{CarnovaleEsposito}.\\
				Let $\rho:U_P\times su U_P^s\longrightarrow  U_P\cdot (suU_P^s)$ be the dominant morphism mapping $(v,x)$ to $vxv^{-1}$. There exists an open subset $V$ of $U_P\cdot (suU_P^s)$ such that, $\dim U_P\cdot (suU_P^s)=\dim U_P+ \dim U_P^s -\dim \rho^{-1}(y)$ for all $y\in V$. Using $U_P$-equivariance of $\rho$ we can choose $V$ in such way $V\cap su U_P^s\neq \emptyset$. Let $y\in V\cap su U_P^s$. Then $\rho^{-1}(y)=\{(v,x)\in U_P\times  (suU_P^s) \ | \ v\cdot x =y \}$.
				Let us consider the Levi decomposition $P^{s\circ}=L^{s\circ}U_P^{s}$. Observe that $su\in N_{G^s}P^{s\circ}\cap N_{G^s}L^{s\circ}$, so, by Proposition \ref{semisimplepart}, the elements in $suU_P^s$ have semisimple part conjugated to $s$ by an element of $U_P^s$, so $y_s=s$. If $(v,x)\in \rho^{-1}(y)$, then $vsv^{-1}=vx_sv^{-1}=y_s=s$, so $v\in U_P^s$ and then $\rho^{-1}(y)\cong U_P^s$.
				
				Therefore $\dim (U_P\cdot (suU_P^s))=\dim U_P$ and $\overline{U_P\cdot (suU_P^s)}\subseteq suU_P$ implies the equality.
				By \cite[1]{PMIHES_1965__25__49_0}, $(suU_P)^{reg}$ is dense in $(suU_P)$. Thus $U_P\cdot (suU_P^s)\cap (suU_P)^{reg}\neq \emptyset$.
				
			\end{proof}

			\begin{proposition}\label{unipotentinduction}
				With notation as above
				$$	\Ind_{D_L}^D(L\cdot su)=G^{\circ}\cdot s(\Ind_{L^{s\circ}u}^{G^{s\circ}u}(L^{s\circ}\cdot u)).
				$$
			\end{proposition}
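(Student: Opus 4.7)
The plan is to verify the identity of $G^{\circ}$-orbits by exhibiting a common element and invoking that both sides are single $G^{\circ}$-orbits. First, I would apply Lemma \ref{representative of induced} to choose $y\in\Ind_{D_L}^{D}(L\cdot su)\cap suU_P^s$. The argument inside the proof of that lemma (Proposition \ref{semisimplepart} applied in $G^s$ to the parabolic $P^{s\circ}=L^{s\circ}U_P^s$ with $su\in N_{G^s}P^{s\circ}\cap N_{G^s}L^{s\circ}$) shows that any $z\in suU_P^s$ has semisimple part $U_P^s$-conjugate to $s$, and since $U_P^s\subset C_G(s)$ this forces $z_s=s$. Writing $y=suv$ with $v\in U_P^s$, the Jordan decomposition of $y$ reads $y_s=s$ and $y_u=uv\in uU_P^s$, with $s$ commuting with $y_u$.

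Next, I would verify that the right-hand side is a single $G^{\circ}$-orbit. Setting $O=\Ind_{L^{s\circ}u}^{G^{s\circ}u}(L^{s\circ}\cdot u)$ and choosing a representative $w_0\in O$, every $h\in G^{s\circ}$ commutes with $s$, so $s(hw_0h^{-1})=h(sw_0)h^{-1}$. Hence $s\cdot O=G^{s\circ}\cdot(sw_0)$ and
\[
G^{\circ}\cdot s(O)=G^{\circ}\cdot G^{s\circ}\cdot(sw_0)=G^{\circ}\cdot(sw_0),
\]
a single $G^{\circ}$-orbit.

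The main step is to prove that $y_u\in O$, i.e.\ that $y_u$ is $G^{s\circ}$-regular in $uU_P^s$. For any $z\in suU_P^s$ one has $z_s=s$, $z_u\in uU_P^s$, and the centralizer identity $(C_{G^{\circ}}(z))^{\circ}=(C_{G^{s\circ}}(z_u))^{\circ}$—coming from $(H\cap K)^{\circ}=(H^{\circ}\cap K^{\circ})^{\circ}$ together with $(C_{G^{\circ}}(s))^{\circ}=G^{s\circ}$—yields
\[
\dim G^{\circ}\cdot z=\dim G^{\circ}-\dim G^{s\circ}+\dim G^{s\circ}\cdot z_u.
\]
Since $y\in\Ind_{D_L}^{D}(L\cdot su)=G^{\circ}\cdot(suU_P)^{\mathrm{reg}}$ and $y\in suU_P^s\subset suU_P$, the element $y$ attains the maximum of $\dim G^{\circ}\cdot z$ over $suU_P$, in particular over $suU_P^s$. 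By the displayed formula, $y_u$ attains the maximum of $\dim G^{s\circ}\cdot z_u$ over $z_u\in uU_P^s$, so $y_u\in G^{s\circ}\cdot(uU_P^s)^{\mathrm{reg}}=O$. Consequently $y=sy_u\in G^{\circ}\cdot s(O)$, and as $\Ind_{D_L}^{D}(L\cdot su)$ and $G^{\circ}\cdot s(O)$ are both single $G^{\circ}$-orbits containing $y$, they coincide. The main obstacle is exactly this dimension transfer: it combines the centralizer identification above with the fact, built into Lemma \ref{representative of induced}, that the maximum of $\dim G^{\circ}\cdot z$ on $suU_P$ is already attained inside $suU_P^s$.
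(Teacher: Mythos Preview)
Your proof is correct and follows essentially the same route as the paper: both invoke Lemma~\ref{representative of induced} to produce an element of $\Ind_{D_L}^{D}(L\cdot su)$ inside $suU_P^{s}$, then use that both sides are single $G^{\circ}$-orbits to conclude equality. Your version spells out explicitly the centralizer/dimension identity linking $G^{\circ}$-regularity in $suU_P^{s}$ to $G^{s\circ}$-regularity in $uU_P^{s}$, which the paper's one-line assertion $s\bigl(\Ind_{L^{s\circ}u}^{G^{s\circ}u}(L^{s\circ}\cdot u)\bigr)=G^{s\circ}\cdot (suU_P^{s})^{\mathrm{reg}}\subset G^{\circ}\cdot (suU_P)^{\mathrm{reg}}$ leaves implicit.
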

			\begin{proof}
				We proceed as in \cite[Proposition 4.5]{CarnovaleEsposito}. We give the proof for completeness.\\
				Since $\Ind_{D_L}^D(L\cdot su)\cap suU_P^{s}\neq \emptyset$, there holds $(suU_P^{s})^{\text{reg}}\subset (suU_P)^{\text{reg}}$, so
				$$
				s(\Ind_{L^{s\circ}u}^{G^{s\circ}u}(L^{s\circ}\cdot u))=G^{s\circ}\cdot (suU_P^{s})^{\text{reg}}\subset G^{\circ}\cdot (suU_P)^{\text{reg}}=\Ind_{D_L}^D(L\cdot su).
				$$ 
				Thus $\Ind_{D_L}^D(L\cdot su)=G^{\circ}\cdot s(
				\Ind_{L^{s\circ}u}^{G^{s\circ}u}(L^{s\circ}\cdot u))$.
				
			\end{proof}

		\end{subsection}
		\begin{subsection}{Closure and regular closure}
			In \cite{charactersheafI} G. Lusztig described the closure of a Jordan class. Building on this, we describe the closure and the regular closure of a Jordan class in terms of induced orbits.\\
			We retain notation of Section \ref{sectioninduction}. We observe that, since the set $\overline SU_P$ is contained in the semidirect product $(N_GP\cap N_GL)U_P$, and $\overline S$ is $L$-stable, then $\overline SU_P$ is $P$-stable.\\This allows us to consider the variety	
	
			$$
			\widetilde{X}=\left\{(x P,h) \in G^{\circ} / P \times G \ | \ x^{-1} h x \in \overline{S} U_{P}\right\}.
			$$ 
			By \cite[Lemma 3.14]{charactersheafI}  the image of $\widetilde{X}$ through the projection $\pi$ on the second factor of $\widetilde{X}$ is $\overline{J(g)}$, so
			$$
			\pi(\widetilde{X})=\bigcup_{x\in G^{\circ}}x\overline{S}U_Px^{-1}=G^{\circ}\cdot\overline{S}U_P=\overline{J(g)},
			$$
			and also it is an union of Jordan classes.
			Hence by the identification in Lemma \ref{varietadiincidenza}, the image of the map 
			
			\begin{equation}\label{phi}
				\tilde{\phi}:G^{\circ}\times^P\overline{S}U_P \longrightarrow G
			\end{equation}
			$$
			[(x,y)]\mapsto xyx^{-1}
			$$
			is $\overline{J(g)}$.\\

			Now, we look at the structure of isolated Jordan classes.

			By \cite[3.3 (a)]{charactersheafI},
			any isolated Jordan class of a reductive group $H$ is a single orbit for the action of $Z(H^{\circ})^{\circ} \times H^{\circ}$ on $H$:
			\begin{equation}\label{centeraction}
				(Z(H^{\circ})^{\circ}\times H^{\circ})\times H\rightarrow H
			\end{equation}
			$$
			((z,x),y)\mapsto xzyx^{-1} .
			$$
			
			We apply it to the case  $H=(N_GP\cap N_GL)$, so the isolated Jordan class $S$ in $N_GP\cap N_GL$ considered above, is an orbit for $Z(L)^{\circ}\times L$. 
			Then 
			
			$$
			S=Z(L)^{\circ}(L\cdot g).
			$$

			We need the following lemma.
			\begin{lemma}\label{closureisolated}
				With notation as above
				\begin{enumerate}[(a)]
					\item $S=Z(L)^{\circ}([L,L]\cdot g)$;
					\item $\overline S= Z(L)^{\circ}(\overline{L\cdot g})=Z(L)^{\circ}(\overline{[L,L]\cdot g})$;
					\item $S=T(g)(L\cdot g)$;
					\item $\overline S=T(g)(\overline{L\cdot g})$.
					
				\end{enumerate}
			\end{lemma}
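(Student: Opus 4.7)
For part (a), I would use the decomposition $L = Z(L)^{\circ}[L,L]$ of the connected reductive group $L$. Writing $x = z_0 y$ with $z_0 \in Z(L)^{\circ}$ central in $L$ and $y \in [L,L]$, centrality gives $xgx^{-1} = y(z_0 g z_0^{-1})y^{-1}$; using that $g \in N_G L$ normalizes $Z(L)^{\circ}$, I rewrite $z_0 g z_0^{-1} = g\cdot \alpha$ with $\alpha = g^{-1}z_0 g z_0^{-1} \in Z(L)^{\circ}$, obtaining $xgx^{-1} = (ygy^{-1})\cdot \alpha \in ([L,L]\cdot g)\cdot Z(L)^{\circ}$. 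The same kind of reordering shows $Z(L)^{\circ}\cdot([L,L]\cdot g) = ([L,L]\cdot g)\cdot Z(L)^{\circ}$ as sets, so $L\cdot g \subseteq Z(L)^{\circ}([L,L]\cdot g)$, whence $S = Z(L)^{\circ}(L\cdot g) \subseteq Z(L)^{\circ}([L,L]\cdot g)$; the reverse inclusion is immediate from $[L,L]\subseteq L$.

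For part (c), the key algebraic identity is $Z(L)^{\circ} = T(g)\cdot A$, where $A := \phi(Z(L)^{\circ})$ is the image of the morphism of tori $\phi(\xi) = g\xi g^{-1}\xi^{-1}$. The kernel of $\phi$ is $C_{Z(L)^{\circ}}(g)$, whose identity component is $T(g)$ by Remark \ref{proprieta}(\ref{T}), so $\dim T(g) + \dim A = \dim Z(L)^{\circ}$; since $T(g)\cdot A$ is a closed connected subgroup of full dimension in the connected torus $Z(L)^{\circ}$, it equals $Z(L)^{\circ}$. Writing $z = t\phi(\xi)$ for $z \in Z(L)^{\circ}$, a direct computation gives $zg = t\phi(\xi)g = t\xi g\xi^{-1}$, and more generally $z\cdot xgx^{-1} = t\cdot(x\xi)g(x\xi)^{-1}$ for any $x \in L$, proving $S \subseteq T(g)(L\cdot g)$. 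The reverse inclusion follows from $T(g) \subseteq Z(L)^{\circ}$.

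For parts (b) and (d) I would first show $Z(L)^{\circ}\overline{L\cdot g} = T(g)\overline{L\cdot g}$. Setting $t=1$ in the formula from (c) gives $\phi(\xi)\cdot xgx^{-1} = (x\xi)g(x\xi)^{-1} \in L\cdot g$, so $A$ preserves $L\cdot g$ and, by continuity, $\overline{L\cdot g}$; combined with $Z(L)^{\circ} = T(g)A$, this yields the claimed equality. One inclusion for (d) is easy: since $T(g)\cdot S = S$ by (c), continuity of multiplication gives $T(g)\overline{S}\subseteq \overline{S}$, so $T(g)\overline{L\cdot g}\subseteq \overline{S}$. The remaining inclusion $\overline{S}\subseteq T(g)\overline{L\cdot g}$ reduces to showing that $T(g)\overline{L\cdot g}$ is closed, since it contains $S$ which is dense in $\overline{S}$. \emph{This closedness is the main obstacle.} The key leverage is that $T(g)$ commutes element-wise with $\overline{L\cdot g}$ (because $T(g) \subseteq Z(L)^{\circ} \cap C_G(g)$ commutes with both $L$ and $g$, hence with $L\cdot g$, and continuity extends this to the closure). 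I would exploit this either by passing to the geometric quotient $N_G L \to T(g)\backslash N_G L$ for the free left $T(g)$-action and showing that $\overline{L\cdot g}$ has closed image there (using that $\mathrm{Stab}_{T(g)}(\overline{L\cdot g})$ is a closed subgroup of $T(g)$, which forces the fibers of the restriction to be well-controlled), or by decomposing $\overline{S}$ into its finitely many $(T(g)\times L)$-orbits and verifying each such orbit contains an element of $\overline{L\cdot g}$, exploiting the structure of semisimple and unipotent parts from Propositions \ref{semisimpleinclosure} and \ref{semisimplepart}. Then (d) follows directly, and (b) follows from (d) via the initial equality $Z(L)^{\circ}\overline{L\cdot g} = T(g)\overline{L\cdot g}$.
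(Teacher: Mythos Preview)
Your argument for (a) is correct and essentially coincides with the paper's. For (c), the paper simply cites \cite[1.21(d)]{charactersheafI}; your direct approach via the decomposition $Z(L)^{\circ}=T(g)\cdot A$ with $A=\phi(Z(L)^{\circ})$ is a legitimate alternative. Two small repairs are needed. First, with $\phi(\xi)=g\xi g^{-1}\xi^{-1}$ one actually gets $\phi(\xi)g=(g\xi g^{-1})\xi^{-1}g=\xi^{-1}(g\xi g^{-1})g=\xi^{-1}g\xi$ (using that $g\xi g^{-1}$ and $\xi^{-1}$ lie in the abelian group $Z(L)^{\circ}$), so the formula is off by an inverse; the conclusion survives since $\xi^{-1}g\xi\in L\cdot g$. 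Second, $\dim(T(g)A)=\dim Z(L)^{\circ}$ requires $T(g)\cap A$ to be finite; this holds because $g$ acts on $Z(L)^{\circ}$ through the finite group $N_GL/L$, so the induced action on the cocharacter lattice is semisimple and $\ker(\gamma-1)\cap\operatorname{Im}(\gamma-1)=0$ over $\mathbb{Q}$.

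For (b) and (d) there is a genuine gap. Your reduction is sound: the $A$-stability of $L\cdot g$ coming out of (c) yields $Z(L)^{\circ}\,\overline{L\cdot g}=T(g)\,\overline{L\cdot g}$, so everything hinges on the closedness of this set. But neither of your two proposed routes establishes it. The quotient map $N_GL\to T(g)\backslash N_GL$ is not closed (a torus is not proper), and the fact that $T(g)$ commutes elementwise with $\overline{L\cdot g}$ gives no control on the image; likewise, the orbit-decomposition route presupposes that $\overline{S}$ has only finitely many $(T(g)\times L)$-orbits, which is not available at this stage.

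The missing idea is the paper's: pass from $L$ to $[L,L]$. Since $[L,L]$ is characteristic in $L$ and $g\in N_GL$, one has $[L,L]\cdot g\subset [L,L]g$, and the multiplication map
\[
m:\ Z(L)^{\circ}\times [L,L]g\longrightarrow Lg
\]
is \emph{finite} (because $Z(L)^{\circ}\cap[L,L]$ is finite, $L$ being reductive), hence closed. Therefore $Z(L)^{\circ}\,\overline{[L,L]\cdot g}=m\bigl(Z(L)^{\circ}\times\overline{[L,L]\cdot g}\bigr)$ is closed; since it contains $S$ by (a) and is contained in $\overline{S}$ by $Z(L)^{\circ}$-stability, it equals $\overline{S}$. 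This is (b). With (b) in hand, your identity $Z(L)^{\circ}\,\overline{L\cdot g}=T(g)\,\overline{L\cdot g}$ immediately gives (d); in fact this replaces, and is cleaner than, the paper's citation of \cite[1.21(b)]{charactersheafI} for that step.
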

			\begin{proof}\phantom{aaa}
				\begin{enumerate}[(a)]
					\item \label{primo punto proposizione isolati} Obviously $Z(L)^{\circ}([L,L]\cdot g)\subset Z(L)^{\circ}(L\cdot g)=S$. Conversely, the subgroup $Z(L)^{\circ}$ is a characteristic subgroup of $L$, so, since $g\in N_GL$, $g\in N_G(Z(L)^{\circ})$. In addition $L$ is reductive, so $L=Z(L)^{\circ}[L,L]$. Hence, if $x\in Z(L)^{\circ}(L\cdot g)$, there exist $z_1,z_2\in Z(L)^{\circ}$ and $l\in [L,L]$ such that $x=z_1z_2lgl^{-1}z_2^{-1}$. Since $z_2^{-1}\in Z(L)^{\circ}$ and $lgl^{-1}\in N_G(Z(L)^{\circ})$, 
					$$
					x= z_3lgl^{-1},
					$$
					for some $z_3\in Z(L)^{\circ}$. Therefore $Z(L)^{\circ}(L\cdot g)\subset Z(L)^{\circ}([L,L]\cdot g)$ and $S=Z(L)^{\circ}(L\cdot g)=Z(L)^{\circ}([L,L]\cdot g)$.
					
					\item \label{point b}
					The derived subgroup $[L,L]$ is characteristic in $L$. So $g\in N_G([L,L])$, then $[L,L]\cdot g\subset [L,L]g$.\\
					The morphism $m: Z(L)^{\circ}\times [L,L] g\longrightarrow Lg$ given by the multiplication, is finite (\cite[Prop 2.3.2, Theorem 2.4.9]{Springer}) hence closed. Thus $Z(L)^{\circ}(\overline{[L,L]\cdot g})=m(Z(L)^{\circ}\times \overline{[L,L]\cdot g})$ is closed in $Lg$, since $Z(L)^{\circ}\times \overline{[L,L]\cdot g}\subset Z(L)^{\circ}\times [L,L] g$ is closed.\\ 
					
					Clearly $\overline{[L,L]\cdot g}\subset \overline{ Z(L)^{\circ}([L,L]\cdot g)}=\overline S$, so by $Z(L)^{\circ}$-stability of $\overline S$ we obtain
					$$S=Z(L)^{\circ}([L,L]\cdot g)\subset Z(L)^{\circ}(\overline{[L,L]\cdot g})=\overline{ Z(L)^{\circ}([L,L]\cdot g)}=\overline S.$$
					Since $Z(L)^{\circ}(\overline{[L,L]\cdot g})$ is closed, thus $Z(L)^{\circ}(\overline{[L,L]\cdot g})=\overline S$.
					
					By point (\ref{primo punto proposizione isolati}), $\overline S=\overline{ Z(L)^{\circ}([L,L]\cdot g)}$.
					Furthermore, from the $Z(L)^{\circ}$-stability of $\overline S$, we get
					$$
					Z(L)^{\circ}(\overline{[L,L]\cdot g})\subset Z(L)^{\circ}(\overline{L\cdot g})\subset \overline S=Z(L)^{\circ}(\overline{[L,L]\cdot g}).
					$$
					So the equality holds.
					\item \label{point c}	By \cite[1.21(d)]{charactersheafI}, since $S\in D_L$, it is an orbit for the action (\ref{centeraction}) restricted to $(Z(L)^{\circ}\cap C_{N_GL}(g))\times L$. By Remark \ref{proprieta} (\ref{T}) there holds $Z(L)^{\circ}\cap C_{N_GL}(g)=T(g)$,
					so
					$$
					S=T(g)(L\cdot g).
					$$
					
					\item By point (\ref{point b}) $Z(L)^{\circ}\overline{L\cdot g}=\overline S$. Hence, since $T(g)\subset Z(L)^{\circ}$,
					$$
					T(g)\overline{L\cdot g}\subset Z(L)^{\circ}\overline{L\cdot g}=\overline S,
					$$		
					
					By \cite[1.21 (b)]{charactersheafI}, for a $L$-stable subset $X$ of $D_L$, we have that if $T(g)X\subset X$ then $Z(L)^{\circ}X\subset X$.\\ Let $X=T(g)\overline{L\cdot g}$. So obviously $T(g)X\subset X$, then $Z(L)^{\circ}X\subset X$, that is $Z(L)^{\circ}T(g)\overline{L\cdot g}=Z(L)^{\circ}\overline{L\cdot g}=\overline{S}\subset T(g)\overline{L\cdot g}$. 
					Thus $\overline S=T(g)\overline{L\cdot g}$.
				\end{enumerate}
			\end{proof}
			\begin{proposition}\label{closurejordanclass}
				With notation as above
				$$
				\overline{J(g)}=\bigcup_{z\in T(g)}\overline{\Ind_{D_L}^D(L\cdot zg)}.
				$$
			\end{proposition}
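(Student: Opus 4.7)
The plan is to combine the description of $\overline{J(g)}$ as the image of the map $\tilde\phi$ (equation \eqref{phi}) with Lemma \ref{closureisolated}(d), which writes $\overline S = T(g)\overline{L\cdot g}$. Since $T(g)\subset Z(L)^\circ$ is central in $L$, each $z\in T(g)$ commutes with every element of $L$, and therefore
\[
z\,\overline{L\cdot g} \;=\; \overline{z(L\cdot g)} \;=\; \overline{L\cdot(zg)}.
\]
Moreover $zg\in Z(L)^\circ g\subset N_GL$ satisfies $zgPg^{-1}z^{-1}=zPz^{-1}=P$ (since $z\in L\subset P$), so $zg\in N_GP\cap N_GL$, and $zg\in S$ is isolated in $N_GL$ (all elements of an isolated Jordan class are isolated). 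Hence the induction $\Ind_{D_L}^D(L\cdot zg)$ is defined in the sense of Proposition~\ref{definitionofphi}.

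Next I would assemble the decomposition
\[
\overline S\,U_P \;=\; \Bigl(\,\bigcup_{z\in T(g)}\overline{L\cdot(zg)}\Bigr)U_P \;=\; \bigcup_{z\in T(g)}\overline{L\cdot(zg)}\,U_P,
\]
and apply $G^\circ$-conjugation to both sides. On the left, by the discussion preceding \eqref{phi} (and Lemma \ref{varietadiincidenza}),
\[
\overline{J(g)} \;=\; \tilde\phi\bigl(G^\circ\times^P \overline S\,U_P\bigr) \;=\; \bigcup_{x\in G^\circ} x(\overline S\,U_P)x^{-1}.
\]
On the right, applying Proposition \ref{definitionofphi} with $g$ replaced by $zg$ (using the parabolic $P$, which is still $zg$-stable with Levi $L$) gives
\[
\overline{\Ind_{D_L}^D(L\cdot zg)} \;=\; \bigcup_{x\in G^\circ} x\bigl(\overline{L\cdot(zg)}\,U_P\bigr)x^{-1}.
\]
Taking the union over $z\in T(g)$ and swapping the two unions yields the required equality.

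The only subtle points are (i) verifying that $zg$ shares the parabolic $P$ and the Levi $L$ with $g$ so that the induction map of Proposition \ref{definitionofphi} applies, which is immediate from $z\in Z(L)^\circ$, and (ii) the identity $z\,\overline{L\cdot g}=\overline{L\cdot(zg)}$, which is just the continuity of left translation by $z$ together with centrality of $z$ in $L$. Everything else is formal manipulation of unions and of the incidence variety description of $\tilde\phi$, so I expect no serious obstacle beyond being careful that $T(g)$ actually acts by left multiplication and not in some twisted fashion on the orbit $L\cdot g$.
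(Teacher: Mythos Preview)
Your proposal is correct and follows essentially the same approach as the paper: decompose $\overline S$ as $\bigcup_{z\in T(g)}\overline{L\cdot zg}$ via Lemma~\ref{closureisolated}(d), apply Proposition~\ref{definitionofphi} with $zg$ in place of $g$ for each $z$, and then identify the image of $\tilde\phi$ with the union of the images of the individual maps $\phi_z$. Your explicit verification in points (i) and (ii)---that $zg\in N_GP\cap N_GL$ and that $z\,\overline{L\cdot g}=\overline{L\cdot(zg)}$---is in fact slightly more careful than the paper's own proof, which invokes Proposition~\ref{definitionofphi} for $zg$ without comment.
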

			\begin{proof}
				
				Let $z\in T(g)$ and let $\phi_z$ be the map 
				$$
				\phi_z : 	G^{\circ}\times^P\overline{L\cdot zg}U_P\longrightarrow D
				$$
				$$
				[(a,x)]\mapsto axa^{-1}.
				$$
				By Proposition \ref{definitionofphi}, the image of $\phi_z$ is $\overline{\Ind_{D_L}^D(L\cdot zg)}$. By Lemma \ref{closureisolated} (d)
				$$
				G^{\circ}\times^P\overline{S}U_P=G^{\circ}\times^P\bigcup_{z\in T(g)}\overline{L\cdot zg}U_P=\bigcup_{z\in T(g)}G^{\circ}\times^P\overline{L\cdot zg}U_P
				$$
				Then $\im\widetilde{\phi}=\bigcup_{z\in T(g)}\im\phi_z$, where $\widetilde{\phi}$ is the map defined in (\ref{phi}), so
				$\overline{J(g)}=\bigcup_{z\in T(g)}\overline{\Ind_{D_L}^D(L\cdot zg)}.$
				
			\end{proof}
			We study now the regular closure of $J(g)$, that is $\overline{J(g)}^{\reg}$.\\

			Let $d\in \N$ be such that $J(g)\subset G_{(d)}$. Then $\overline{J(g)}\subset \overline{G_{(d)}}\subset\bigcup_{k\leq d}G_{(k)}$, so $J(g)\subset  \overline{J(g)}^{\text{reg}}= G_{(d)}\cap \overline{J(g)}$. \\
			Let $w$ be a representative of $\Ind_{L^{s\circ}u}^{G^{s\circ}u}(L^{s\circ}\cdot u)$.
			By Proposition \ref{unipotentinduction},\\ $
			\Ind_{D_L}^D(L\cdot su)=G^{\circ}\cdot sw$. Hence
			$$	\codim_D\Ind_{D_L}^D(L\cdot su)=\codim_{D}G^{\circ}\cdot sw=\dim (G^{s\circ}\cap G^{w\circ})=$$
			\begin{align}\label{codim}
				=	\dim G^{s\circ}-\dim G^{s\circ}\cdot w=\codim_{G^{s\circ}u} G^{s\circ}\cdot w.
			\end{align}

			\begin{proposition}\label{codimensione}
				With notation as above
				$$
				\codim_{D_L}(L\cdot g)=\codim_{D}\Ind_{D_L}^D(L\cdot g)
				$$ 
			\end{proposition}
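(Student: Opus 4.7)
The plan is to reduce the equality to the classical Lusztig--Spaltenstein codimension formula for unipotent induction in the connected reductive group $G^{s\circ}$ (with Levi $L^{s\circ}$ and parabolic $P^{s\circ}$), using formula (\ref{codim}) to handle the disconnected side and Proposition \ref{unipotentinduction} to identify the induced orbit as $G^{\circ}\cdot sw$.

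First I would compute the left-hand side directly. Since $D_L = Lg$ has dimension $\dim L$ and the stabilizer of $g$ under the $L$-conjugation action on $D_L$ is $C_L(g) = L^s\cap L^u$ (by uniqueness of Jordan decomposition), we get
\[
\codim_{D_L}(L\cdot g) \;=\; \dim(L^s\cap L^u).
\]
Next I would pass to the identity component: $L^{s\circ}$ is by definition the identity component of $L^s$, hence open in $L^s$, so $L^{s\circ}\cap L^u$ is open in $L^s\cap L^u$ and in particular contains its identity component. Therefore
\[
\dim(L^s\cap L^u) \;=\; \dim(L^{s\circ}\cap L^u) \;=\; \dim C_{L^{s\circ}}(u) \;=\; \codim_{L^{s\circ}}(L^{s\circ}\cdot u).
\]

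For the right-hand side I would invoke Proposition \ref{unipotentinduction}, which writes $\Ind_{D_L}^D(L\cdot su) = G^{\circ}\cdot sw$ with $w$ a representative of $\Ind_{L^{s\circ}u}^{G^{s\circ}u}(L^{s\circ}\cdot u)$, together with the calculation (\ref{codim})
\[
\codim_D \Ind_{D_L}^D(L\cdot g) \;=\; \dim G^{s\circ} - \dim G^{s\circ}\cdot w \;=\; \codim_{G^{s\circ}}(G^{s\circ}\cdot w).
\]
Since $L^{s\circ}$ is a Levi subgroup of the \emph{connected} reductive group $G^{s\circ}$ and $P^{s\circ}$ is a parabolic of $G^{s\circ}$ with Levi $L^{s\circ}$, the classical Lusztig--Spaltenstein result of \cite{LS} applied to the unipotent orbit $L^{s\circ}\cdot u$ gives
\[
\codim_{L^{s\circ}}(L^{s\circ}\cdot u) \;=\; \codim_{G^{s\circ}}\Ind_{L^{s\circ}}^{G^{s\circ}}(L^{s\circ}\cdot u) \;=\; \codim_{G^{s\circ}}(G^{s\circ}\cdot w).
\]

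Chaining these equalities yields $\codim_{D_L}(L\cdot g) = \codim_D \Ind_{D_L}^D(L\cdot g)$, as required. I do not expect any real obstacle: the argument is bookkeeping around Proposition \ref{unipotentinduction} and the classical connected statement. The only point meriting care is the replacement of $L^s$ by $L^{s\circ}$ in the dimension count, which is immediate from the fact that identity components are open.
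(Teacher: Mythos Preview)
Your overall strategy matches the paper's: reduce to a codimension-preservation statement for unipotent induction via (\ref{codim}) and Proposition \ref{unipotentinduction}, and compute $\codim_{D_L}(L\cdot g)$ as the dimension of the centraliser $L^{s}\cap L^{u}$. The chain of equalities is essentially the same as the paper's.

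There is, however, a genuine gap in your invocation of \cite{LS}. You write $\Ind_{L^{s\circ}}^{G^{s\circ}}(L^{s\circ}\cdot u)$ and treat $L^{s\circ}\cdot u$ as a unipotent orbit inside the connected group $L^{s\circ}$, but nothing guarantees $u\in G^{s\circ}$ (let alone $u\in L^{s\circ}$). Indeed, if $s\in G^{\circ}$ then $u=s^{-1}g\in D\neq G^{\circ}$, so $u\notin G^{s\circ}$; this is exactly what happens in the paper's running example $G=SL(n)\rtimes\langle\tau\rangle$ in characteristic $2$, where the unipotent part $u\tau$ always lies in the non-identity component. In that situation $L^{s\circ}\cdot u$ sits in the coset $L^{s\circ}u$, and the induced orbit that Proposition \ref{unipotentinduction} produces is $\Ind_{L^{s\circ}u}^{G^{s\circ}u}(L^{s\circ}\cdot u)\subset G^{s\circ}u$, not an induction between connected groups. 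The connected result of \cite{LS} therefore does not literally apply.

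The fix is precisely what the paper uses: replace the appeal to \cite{LS} by its disconnected analogue, \cite[II, Proposition 3.2]{Spaltestein}, which gives
\[
\codim_{L^{s\circ}u}(L^{s\circ}\cdot u)=\codim_{G^{s\circ}u}\Ind_{L^{s\circ}u}^{G^{s\circ}u}(L^{s\circ}\cdot u).
\]
Since $\dim L^{s\circ}u=\dim L^{s\circ}$ and $\dim G^{s\circ}u=\dim G^{s\circ}$, this is numerically the identity you wrote, and once it is in place the rest of your argument goes through verbatim.
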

			\begin{proof}
				There holds 
				\begin{flalign*}
					&	\codim_D \Ind_{D_L}^D(L\cdot su)=\codim_{G^{s\circ}u} \Ind_{L^{s\circ} u}^{G^{s\circ}u}(L^{s\circ}\cdot u)=\\
					&=	\codim_{L^{s\circ}u}L^{s\circ}\cdot u=
					\dim L^{s\circ}u-\dim L^{s\circ}\cdot u=\\
					&		=\dim L^{s\circ}-\dim L^{s\circ}\cdot u=\dim L^{s\circ}-\dim L^{s\circ}+\dim (L^{s\circ}\cap L^{u\circ})=\\
					&	=		\dim (L^{s\circ}\cap L^{u\circ})=\codim_{D_L}L\cdot su.
				\end{flalign*}
				where the first equality is (\ref{codim}), the second is \cite[Proposition 3.2]{Spaltestein}, and the others follow from properties of the Jordan decomposition and the equality $\dim D_L=\dim L$.
			\end{proof}
			\begin{remark}\label{Lzso=Gso}
				By \cite[3.9]{charactersheafI}, $G^{s\circ}\subseteq L$. Then $L^{zs\circ}=G^{s\circ}$, for any $z\in Z(L)^{\circ}$. Indeed $G^{s\circ}\subset L\cap G^{zs}\subset L^{zs}$, and then if $x\in L^{zs\circ}\subset L\cap G^{zs}$, then $x$ commutes with $zs$ and with $z$, so $x$ also commutes with $s$, so $x\in G^{s}$, thus $L^{zs\circ}\subset G^{s\circ}$. 
			\end{remark}
			
			\begin{proposition}\label{reg}
				With notation as above
				\begin{enumerate}[(a)]
					\item \label{reg a}
					$	\overline{J(g)}^{\text{reg}}=\bigcup_{z\in T(g)}\Ind_{D_L}^D(L\cdot zg),
					$
					\item \label{reg b} $\overline{J(g)}^{\text{reg}}=\bigcup_{z\in T(g)}G^{\circ}\cdot zs\Ind_{G^{s\circ}u}^{G^{zs\circ}u}(G^{s\circ}\cdot u).$
				\end{enumerate}
			\end{proposition}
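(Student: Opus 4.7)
The plan is to derive part (a) from Proposition \ref{closurejordanclass} by verifying that each piece $\Ind_{D_L}^D(L\cdot zg)$ already attains the maximal $G^{\circ}$-orbit dimension inside $\overline{J(g)}$, and then to deduce part (b) by unfolding each induced orbit via Proposition \ref{unipotentinduction} together with Remark \ref{Lzso=Gso}.

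The crucial computation is the dimension of $\Ind_{D_L}^D(L\cdot zg)$. Since $z\in T(g)\subset Z(L)$, for every $l\in L$ we have $l(zg)l^{-1}=z\,lgl^{-1}$, so $L\cdot zg=z(L\cdot g)$ and in particular $\dim L\cdot zg=\dim L\cdot g$. By Remark \ref{proprieta}(a), $C_G(s)^{\circ}\subset L$, hence $C_{G^{\circ}}(g)^{\circ}\subset C_{G^{\circ}}(s)^{\circ}\subset L$, which gives $\dim C_{G^{\circ}}(g)=\dim C_L(g)$ and therefore
\begin{equation*}
\dim G^{\circ}\cdot g \;=\; \dim G^{\circ}-\dim L+\dim L\cdot g.
\end{equation*}
Combining this with Proposition \ref{codimensione} and the equalities $\dim D=\dim G^{\circ}$, $\dim D_L=\dim L$, we obtain $\dim \Ind_{D_L}^D(L\cdot zg)=\dim G^{\circ}\cdot g=d$, the common dimension of $G^{\circ}$-orbits in $J(g)$ (see Remark \ref{centsemisuguali}).

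Once this is in hand, both inclusions of (a) are short. Since $\overline{J(g)}\subseteq \bigcup_{k\leq d}G_{(k)}$ and each $\Ind_{D_L}^D(L\cdot zg)$ is contained in $\overline{J(g)}$ by Proposition \ref{closurejordanclass}, with all its orbits of dimension $d$, the right-hand side of (a) lies in $\overline{J(g)}^{\reg}$. Conversely, if $h\in\overline{J(g)}^{\reg}$, Proposition \ref{closurejordanclass} places $h$ in $\overline{\Ind_{D_L}^D(L\cdot zg)}$ for some $z\in T(g)$; since $\dim G^{\circ}\cdot h=d$ equals the dimension of this irreducible closure, $G^{\circ}\cdot h$ must be its unique open orbit, forcing $h\in \Ind_{D_L}^D(L\cdot zg)$.

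For (b) I would apply Proposition \ref{unipotentinduction} to $zg$ in place of $g$: since $z$ is semisimple and commutes with both $s$ and $u$, the Jordan decomposition of $zg$ is $(zs)u$, and the proposition yields
\begin{equation*}
\Ind_{D_L}^D(L\cdot zg) \;=\; G^{\circ}\cdot zs\,\Ind_{L^{zs\circ}u}^{G^{zs\circ}u}(L^{zs\circ}\cdot u).
\end{equation*}
Remark \ref{Lzso=Gso} identifies $L^{zs\circ}$ with $G^{s\circ}$, and substituting this into part (a) gives (b). The main obstacle is the dimension bookkeeping of the second step; once one notices that $L\cdot zg$ is merely the $z$-translate of $L\cdot g$, so that the induced orbits for the various $z\in T(g)$ all share the same dimension $d$, the remaining arguments reduce to routine comparisons of centralizers.
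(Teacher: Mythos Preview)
Your argument is correct and follows essentially the same route as the paper. Both proofs hinge on Proposition~\ref{closurejordanclass} and Proposition~\ref{codimensione} for part~(a), and on Proposition~\ref{unipotentinduction} together with Remark~\ref{Lzso=Gso} for part~(b). The only difference is how you pin down that every $\Ind_{D_L}^D(L\cdot zg)$ has dimension exactly $d$: the paper argues indirectly by taking a maximal orbit in $\overline{J(g)}$, identifying it as an induced orbit, and then invoking Proposition~\ref{codimensione} to see that all induced orbits share that dimension; you instead compute directly that $\dim G^{\circ}\cdot g=\dim G^{\circ}-\dim L+\dim L\cdot g$ via the inclusion $C_{G^{\circ}}(g)^{\circ}\subset L$ and match it against Proposition~\ref{codimensione}. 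This is a minor and equally valid variant, not a different strategy.
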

			\begin{proof}
				\phantom{aaa}
				\begin{enumerate}[(a)]
					\item \label{a} Let  $\mathcal{O}$ be a $G^{\circ}$-orbit in $\overline{J(g)}$ of maximal dimension. By Proposition \ref{closurejordanclass}, $\exists z\in T(g)$ such that $\mathcal{O}\subset \overline{\Ind_{D_L}^D(L\cdot zg)}$, so, by maximality, $\mathcal{O}=\Ind_{D_L}^D(L\cdot zg)$. So there exists an induced orbit in $\overline{J(g)}$ that has dimension $d$, where $d\in \N$ is such that $J(g)\subset G_{(d)}$. We claim that all the induced $G^{\circ}$-orbits from $L$-orbits in $S$ have the same dimension.  Since $S$ is a Jordan class, the dimension of its $L$-orbits concide. The claim follows from Proposition \ref{codimensione}.
					\item Let $z\in T(g)$. Then $z$ is semisimple and lies in $G^{s}\cap G^u$, so $(zg)_s=zs$ and $(zg)_u=u$. Therefore, by Proposition \ref{unipotentinduction} and point (\ref{a})
					$$\overline{J(g)}^{\text{reg}}=\bigcup_{z\in T(g)}\Ind_{D_L}^D(L\cdot zg)=\bigcup_{z\in T(g)}G^{\circ}\cdot zs\Ind_{L^{zs\circ}u}^{G^{zs\circ}u}(L^{zs\circ}\cdot u)=$$
					$$=\bigcup_{z\in T(g)}G^{\circ}\cdot zs\Ind_{G^{s\circ}u}^{G^{zs\circ}u}(G^{s\circ}\cdot u),$$		
					where the last equality follows from Remark \ref{Lzso=Gso}.
				\end{enumerate}
			\end{proof}

		\end{subsection}
		\begin{subsection}{The poset of Jordan classes}\label{poset}
			Let $J_1,J_2$ be Jordan classes. In \cite[7.2 (c)]{charactersheafII} G. Lusztig described when $J_1\subset \overline{J_2}$. In this section we describe when $J_1\subset \overline{J_2}^{\reg}$ in terms of induced orbits.

			We define a partial order on the set of Jordan classes:
			$$J_1\leq J_2 \text{ if and only if } J_1\subseteq \overline{J_2}^{\text{reg}}.$$
			
			Let $g_i\in J_i$. Set $L_i=L(g_i)$ and let $P_i$ be a parabolic subgroup of $G^{\circ}$ with Levi $L_i$ and such that $g_i\in N_GP_i$. Let $S_i$ be the isolated Jordan class in $N_GP_i\cap N_GL_i$ of $g_i$, for $i=1,2$.\\
			Let $g_1=tv$ and $g_2=su$ be respectively the Jordan decomposition of $g_1$ and $g_2$.
			By construction of $S_i$ we have $tv\in S_1^{*}$ and $su\in S_2^{*}$, where, as we saw in the first Section, $S_i^{*}=\{h\in S_i \ | \ C_G(h_s)^{\circ}\subset L_i\}$. We recall that, by \cite[3.9]{charactersheafI}, if $h_i\in S_i^{*}$ then $T_G(h_i)=T_{N_GL}(h_i)$ and $L(h_i)=L_i$. Therefore

			\begin{itemize}
				\item $T_G(tv)=T_{N_GL_1}(tv)$ and $T_G(su)=T_{N_GL_2}(su)$,
				\item $L(tv)=L_1$ and $L(su)=L_2$.
			\end{itemize}
	
			\begin{proposition}
				With the notation above if $J_1\leq J_2$,
				then $\exists y\in G^{\circ}$ such that
				\begin{enumerate}[(a)]
					\item \label{classification c}$v\in y\cdot (\Ind_{G^{s\circ}u}^{G^{zs\circ}u}(G^{s\circ}\cdot u))$;
					\item \label{classification a} $T(tv)t\subset y\cdot (T(su)s)$;
					\item \label{classification b} $y\cdot (G^{s\circ})\subset G^{t\circ}$.
					
				\end{enumerate}			
			\end{proposition}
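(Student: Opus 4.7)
The strategy is to extract the witness $y$ directly from the decomposition of $\overline{J(g_2)}^{\reg}$ provided by Proposition \ref{reg}(b). Since $J_1\leq J_2$ means $tv=g_1\in\overline{J(g_2)}^{\reg}$, that proposition furnishes a $z\in T(su)$, a $y\in G^{\circ}$, and an element $w\in\Ind_{G^{s\circ}u}^{G^{zs\circ}u}(G^{s\circ}\cdot u)$ such that $tv=y(zs\cdot w)y^{-1}$. Because $z\in Z(L_2)^{\circ}$ centralizes $s\in L_2$, the product $zs$ is semisimple; the induced element $w$ is unipotent and centralizes $zs$. Uniqueness of the Jordan decomposition then forces $t=y(zs)y^{-1}$ and $v=ywy^{-1}$, which immediately yields (a).

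For (c), the relation $t=y(zs)y^{-1}$ gives $G^{t\circ}=yG^{zs\circ}y^{-1}$, so $y\cdot G^{s\circ}\subset G^{t\circ}$ reduces to $G^{s\circ}\subset G^{zs\circ}$. Since $z$ lies in $Z(L_2)^{\circ}$ and $G^{s\circ}\subset L_2$, the element $z$ centralizes $G^{s\circ}$; as $s$ centralizes $G^{s\circ}$ as well, so does $zs$, and connectedness of $G^{s\circ}$ closes the argument.

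The subtlety lies in (b). Combining $t=y(zs)y^{-1}$ with $z\in T(su)$ turns the containment $T(tv)t\subset y\cdot(T(su)s)$ into the torus inclusion $T(tv)\subset yT(su)y^{-1}$. After conjugating by $y^{-1}$, using $v=ywy^{-1}$, this becomes
$$(Z(G^{zs\circ})\cap C_G(w))^{\circ}\subset(Z(G^{s\circ})\cap C_G(u))^{\circ}.$$
I would prove this in two steps. First, $Z(G^{zs\circ})^{\circ}\subset Z(G^{s\circ})^{\circ}$: any $x$ in the torus $Z(G^{zs\circ})^{\circ}$ commutes with $z\in G^{s\circ}\subset G^{zs\circ}$, so also with $s=z^{-1}(zs)$, placing $x$ in $G^{s\circ}$; since $G^{s\circ}\subset G^{zs\circ}$, the element $x$ already centralizes $G^{s\circ}$. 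Second, any element of $Z(G^{s\circ})^{\circ}$ automatically centralizes the unipotent $u$, because $u$ commutes with $s$ and hence lies in $G^{s\circ}$. The main obstacle is precisely this last step: one has to confirm that the possibly larger centralizer $C_G(w)$ does not spoil the inclusion even though $w\neq u$, and the escape is that after intersecting with the central torus $Z(G^{zs\circ})^{\circ}\subset Z(G^{s\circ})^{\circ}$ the condition of centralizing $u$ comes for free.
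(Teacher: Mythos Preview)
Your setup, the Jordan-decomposition argument giving (a), and the proof of (c) are correct and follow the paper's route. The gap is in (b).

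You assert that ``any element of $Z(G^{s\circ})^{\circ}$ automatically centralizes the unipotent $u$, because $u$ commutes with $s$ and hence lies in $G^{s\circ}$.'' But $us=su$ only gives $u\in G^{s}$, not $u\in G^{s\circ}$, and in the disconnected setting this distinction is exactly the point. In the paper's own example $G=SL(3)\rtimes\langle\tau\rangle$ with $g_2=t\tau$ and $t\in T^{\tau}$ regular, one has $s=t$, $u=\tau$, $G^{s\circ}=T$, and $\tau\notin T$; here $Z(G^{s\circ})^{\circ}=T$ while $T\cap C_G(\tau)=T^{\tau}$ is a proper one-dimensional subtorus. So $Z(G^{s\circ})^{\circ}\not\subset C_G(u)$, and your ``escape''---that centralizing $u$ comes for free once you land in $Z(G^{s\circ})^{\circ}$---fails. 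The intersection with $C_G(u)$ in $T(su)=(Z(G^{s\circ})\cap C_G(u))^{\circ}$ is genuinely restrictive.

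The paper closes this gap by not working with an arbitrary $w$ in the induced orbit. Using Lemma~\ref{representative of induced} it picks the specific representative $w=uu'$ with $u'\in U_{P_2}^{zs}$, and then routes the argument through the Levi $L_2=L(g_2)$: from $Z(G^{zs\circ})\subset Z(L_2^{zs\circ})$ and Remark~\ref{Lzso=Gso} one gets $Z(L_2^{zs\circ})=Z(G^{s\circ})$, while the key new input is that any $l\in L_2$ centralizing $uu'$ must already centralize $u$. Indeed $luu'l^{-1}=(lul^{-1})(lu'l^{-1})$ with $lul^{-1}\in N_GL_2$ and $lu'l^{-1}\in U_{P_2}$, and uniqueness in the semidirect product $(N_GP_2\cap N_GL_2)U_{P_2}$ forces $lul^{-1}=u$. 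This is how the $C_G(u)$-condition is recovered, and it genuinely depends on the chosen form of $w$; with a generic $w$ your inclusion $(Z(G^{zs\circ})\cap C_G(w))^{\circ}\subset(Z(G^{s\circ})\cap C_G(u))^{\circ}$ has no reason to hold.
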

			
			\begin{proof}
				Since $J_1\leq J_2$, by Proposition \ref{reg}, $$J_1\subset   \bigcup_{z\in  T(su)}G^{\circ}\cdot (zs\Ind_{G^{s\circ}u}^{G^{zs\circ}u}(G^{s\circ}\cdot u)).$$ Then $\exists z\in T(su)$ and $\exists y\in G^{\circ}$ such that $tv\in y\cdot (zs\Ind_{G^{s\circ}u}^{G^{zs\circ}u}(G^{s\circ}\cdot u))$. By Lemma \ref{representative of induced}, we can choose an element in $\Ind_{G^{s\circ}u}^{G^{zs\circ}u}(G^{s\circ}\cdot u)$ of the form $uu'\in uU_{P_2}^{zs\reg}$. Therefore $tv=y\cdot zsuu'$. 
				We observe that, since $z\in T(su)$, it is semisimple and commutes with $s$, so $zs$ is semisimple. And also $zsuu'=uu'zs$ with $uu'$ unipotent element. So $t=(tv)_s=y\cdot (zsuu')_s=y\cdot zs$ and $v=y\cdot(tv)_u=y\cdot(zsuu')_u=y\cdot uu'$, giving (\ref{classification c}).
				\\
				
				Now we show that $T(zsuu')\subset T(su)$.\\
				We observe that, since $L_2^{zs\circ}$ is a Levi subgroup of $G^{zs\circ}$, then $Z(G^{zs\circ})\subset Z(L_2^{zs\circ})$, so 
				\begin{align}\label{formula toro}
					T(zsuu')=(Z(G^{zs\circ})\cap C_G(uu'))^{\circ} \text { and }
				\end{align}
				$$
				(Z(G^{zs\circ})\cap C_G(uu'))^{\circ}	\subset (Z(L_2^{zs\circ})\cap C_G(uu'))^{\circ}=(Z(L_2^{zs\circ})\cap C_{L_2}(uu'))^{\circ}.
				$$
				Let $l\in C_{L_2}(uu')$. Then $uu'=luu'l^{-1}=lul^{-1}lu'l^{-1}$, where $lul^{-1}\in N_GL_2$ and $lu'l^{-1}\in U_{P_2}$. By the uniqueness of the decomposition in the semidirect product $N_GL_2U_{P_2}$, $lul^{-1}=u$. Thus 
				$$
				C_{L_2}(uu')\subset C_{L_2}(u)\subset C_G(u).
				$$
				Therefore, combining (\ref{formula toro}) with Remark \ref{Lzso=Gso}, we get
				$$
				T(zsuu')\subset(Z(L_2^{zs\circ})\cap C_{L_2}(uu'))^{\circ}\subset (Z(G^{s\circ})\cap C_{G}(u))^{\circ}=T(su).
				$$ 
				Furthermore $z\in T(su)$, so $T(zsuu')zs\subset T(su)s$. Thus \\$T(tv)t=y\cdot (T(zsuu')zs)\subset y\cdot (T(su)s)$, proving (\ref{classification a}).\\
				
				By Remark \ref{Lzso=Gso}, $y\cdot G^{s\circ}=y\cdot L^{zs\circ}\subset y\cdot G^{zs\circ}=G^{t\circ}$, then (\ref{classification b}).
				
			\end{proof}
			
		\end{subsection}
	\end{section}
	
	\begin{section}{Lusztig strata}
		In this section we show that the strata of a disconnected reductive algebraic group defined by Lusztig in \cite{lusztig2020strata} are locally closed using the strategy from \cite{carno}.
		We recall the definition of the strata.\\
		
		Let $W$ be the Weyl group of $G^{\circ}$, viewed as the set of $G^{\circ}$-orbits on $\mathcal{B}\times \mathcal{B}$, where $\mathcal{B}$ is the flag manifold of $G^{\circ}$. 
		The group $G$ acts by inner automorphisms on $G^{\circ}$ and this induces an action of $G / G^{\circ}$ on $W$. In particular, $D$, viewed as an element of $G / G^{\circ}$, defines an automorphism $[D]: W \rightarrow W$ whose fixed point set is denoted by $W^{D}$. It is the Weyl group of some root system (\cite[Appendix]{Lusztig2003HeckeAW}). We set $D_{un}=\{a\in D \ | \ a=a_u\}$.
		
		Let $E$ be the map from $D$ to the set of isomorphism classes of irreducible representations of the group $W^D$ defined in \cite{lusztig2020strata} so
		$$
		E: D\longrightarrow \Irr(W^D)
		$$
		$$
		a\mapsto E(a)
		$$
		where $E(a)$ is constructed according to the following rules
		\begin{itemize}
			\item if $a=a_u$, then $E(a)$ is the unique irreducible representation of $W^D$ such that $\pi_{!}(\overline{\mathbb{Q}_l}[\dim D]_E)|D_{un}$ is (up to shift) the intersection cohomology complex of $\overline{G^{\circ}\cdot a}$ with coefficients in $\overline{\mathbb{Q}_l}$, where $\pi$ is defined as in \cite[1.2]{lusztig2020strata}. 
			\item if $a_s$ is central in $G$, let $D_{a_u}$ be the connected component of $G$ containing $a_u$. We observe that $W^{D_{a_u}}=W^D$, because $D_{a_u}=G^{\circ}a_u$ and $D=G^{\circ}a_sa_u=a_sG^{\circ}a_u=a_s D_{a_u}$ with $a_s$ central. Then $E(a)\coloneqq E(a_u)$.
			\item if $a_s\notin Z(G)$, then $a_s$ is central in $C_G(a_s)$, and we let $D'$ be the connected component of $C_G(a_s)$ containing $a$. We denote by $E'(a)\in \Irr(W(C_G(a_s)^{\circ})^{D'})$ the image of $a$ through the map $E$ referred to the group $C_G(a_s)$. Then we set $E(a)=\textbf{j}_{W(C_G(a_s)^{\circ})^{D'}}^{W(G^{\circ})^D}E'(a)$, where $\textbf{j}$ is the truncated induction as defined in \cite{LS}.
		\end{itemize}

		The fibers of $E$ are called the Lusztig strata of $D$. By \cite[1.16 (e)]{lusztig2020strata} Lusztig strata are union of $G^{\circ}$-orbits of the same dimension, so if $X$ is a Lusztig stratum contained in $D$, then there exists $d\in \mathbb{N}$ such that $X\subset G_{(d)}\cap D$.\\
		
		We recall a basic property of $E$.
		\begin{proposition}\label{finite union of jordan classes}
			If $J$ is a Jordan class, and $a=a_sa_u,h=h_sh_u\in J$, then $E(a)=E(h)$, so strata are unions of Jordan classes.
		\end{proposition}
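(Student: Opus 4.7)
The approach is a reduction to a canonical form followed by a case analysis on the recursive definition of $E$; no induction on $\dim G$ is required, since the recursion terminates in at most two steps.

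First, I would use Remarks~\ref{sameunipotentpart} and \ref{centsemisuguali} together with the fact that $E$ is constant on $G^{\circ}$-orbits (evident from its construction via $G^{\circ}$-equivariant complexes and truncated induction) to replace $h$ by a $G^{\circ}$-conjugate and assume $h=za$ for some $z\in T(a)$. Then $h_s=za_s$, $h_u=a_u$, and $C_G(h_s)^{\circ}=C_G(a_s)^{\circ}$ by Remark~\ref{centsemisuguali}. I would then record three compatibilities that fuel the case analysis: (i) since $z\in T(a)\subset Z(C_G(a_s)^{\circ})^{\circ}\subset G^{\circ}$, conjugation by $h=za$ and by $a$ induce the same automorphism of $W(C_G(a_s)^{\circ})$, so the relevant fixed-point subgroups coincide; (ii) the connected component of $C_G(h_s)$ containing $h$ equals, as a subset of $G$, the connected component of $C_G(a_s)$ containing $a$, because $z\in C_G(h_s)^{\circ}=C_G(a_s)^{\circ}$; (iii) the unipotent parts agree, so $C_G(h_s)^{\circ}\cdot h_u=C_G(a_s)^{\circ}\cdot a_u$.

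Next comes the case analysis, split according to which rule of the definition of $E$ applies. If both $a_s$ and $h_s$ lie in $Z(G)$, rule 2 gives $E(a)=E(a_u)$ and $E(h)=E(h_u)$, and these coincide by (iii) together with the fact that the first-rule construction depends only on the triple $(G^{\circ},D,G^{\circ}\cdot a_u)$. If $a_s\in Z(G)$ but $h_s\notin Z(G)$ (or symmetrically), then $C_G(h_s)^{\circ}=C_G(a_s)^{\circ}=G^{\circ}$, so the truncated induction $\textbf{j}$ appearing in rule 3 for $h$ is the identity on $W$; the inner term $E'(h)$ reduces by rule 2 inside $C_G(h_s)$ to the first-rule construction for $h_u=a_u$ on the component $D$, giving $E(h)=E(a_u)=E(a)$. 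If neither $a_s$ nor $h_s$ lies in $Z(G)$, rule 3 applies to both; by (i) and (ii) the source Weyl groups and their $D$-fixed subgroups agree, and inside $C_G(a_s)$ and $C_G(h_s)$ (which share the identity component $G_1^{\circ}=C_G(a_s)^{\circ}$) the semisimple parts become central, so rule 2 reduces the inner terms $E'(a)$ and $E'(h)$ to the first-rule construction for $a_u=h_u$ on the common component $G_1^{\circ}\cdot a$; the $\textbf{j}$-induction to $W(G^{\circ})^D$ then matches.

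The main obstacle is ensuring that the first-rule Springer-type construction depends only on the triple $(H^{\circ},D',H^{\circ}\cdot u)$ for a unipotent element $u$ in a connected component $D'$ of an algebraic group $H$, and not on the rest of $H$; this has to be read off from the definition in \cite[1.2]{lusztig2020strata}, since we are comparing the construction in the ambient $G$ with the constructions inside the subgroups $C_G(a_s)$ and $C_G(h_s)$. A secondary technical point is to verify that the truncated induction $\textbf{j}$ is trivial when the source and target Weyl groups (together with their fixed-point subgroups) coincide, as needed in the mixed subcase. Granting these two facts, the three subcases combine to $E(a)=E(h)$; the final assertion that strata are unions of Jordan classes is then immediate, since a stratum is by definition a fibre of $E$.
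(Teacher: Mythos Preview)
Your proposal is correct and follows essentially the same approach as the paper: reduce via $G^{\circ}$-conjugation to $h=za$ with $z\in T(a)$, record that $C_G(a_s)^{\circ}=C_G(h_s)^{\circ}$, that the connected components $D'_a$ and $D'_h$ coincide as sets, and that $a_u=h_u$, then conclude $E(a)=E(h)$. The paper's proof is much terser---it states these coincidences and asserts $E(a)=E(h)$ directly---so your case analysis on the three rules defining $E$, your handling of the mixed subcase $a_s\in Z(G)$, $h_s\notin Z(G)$, and your explicit identification of the needed fact that the rule-1 construction depends only on $(H^{\circ},D',H^{\circ}\cdot u)$ simply spell out what the paper leaves implicit.
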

		\begin{proof}
			By Remark \ref{centsemisuguali}, up to conjugation $C_G(a_s)^{\circ}=C_G(h_s)^{\circ}$. Furthermore $h=za$ where $z$ is in $C_G(a_s)^{\circ}$, so $a$ and $h$ are in the same connected component of $C_G(a_s)$. Hence the connected component $D^{'}_h$ of $C_G(h_s)$ containing $h$ and the connected component $D^{'}_a$ of $C_G(a_s)$ containing $a$ concide, as well as the unipotent parts of $a$ and $h$. Thus $E(h)=E(a)$.
		\end{proof}

		By Proposition \ref{reg} (b), if $x\in \overline{J(su)}^{\text{reg}}$, then $\exists z\in T(su)$ and $v\in \Ind_{G^{s\circ}u}^{G^{zs\circ}u}(G^{s\circ}\cdot u)$ such that, up to conjugation, $x=zsv$.\\
		To simplify notation, we set:
		\begin{itemize}
			\item $D'\coloneqq sG^{s\circ}u$ the connected component of $G^{s}$ containing $su$,
			\item $D'_u\coloneqq G^{s\circ}u$ the connected component of $G^{s}$ containing $u$,
			\item $\widetilde{D}$ the connected component of $G^{zs}$ containing $x$
			\item $\widetilde{D_u}\coloneqq G^{zs\circ}u$ the connected component of $G^{zs}$ containing $u$,
			\item   $\widetilde{D'_u}\coloneqq L^{zs\circ}u$ the connected component of $N_{G^{zs}}P^{zs\circ}\cap N_{G^{zs}}L^{zs\circ}$ containing $u$.
			
		\end{itemize}

		Since $x=zsv$ and $v\in \Ind_{D'_u}^{\widetilde{D_u}}(G^{s\circ}\cdot u)$ then $v\in \widetilde{D_u}$, so $W(G^{zs\circ})^{\widetilde{D}}=W(G^{zs\circ})^{\widetilde{D_u}}$.
		Observe  also that, by Remark \ref{Lzso=Gso}, $\widetilde{D'_u}= D'_u$. Then $W(G^{s\circ})^{D'}=W(G^{s\circ})^{\widetilde{D'_u}}$.

		The statement of the following result was suggested by G. Lusztig in \cite[1.12 (b)]{lusztig2020strata}.
		\begin{proposition}\label{strata}
			A stratum $X$ is locally closed.
		
		\end{proposition}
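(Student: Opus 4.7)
The plan is to show $X=\overline{X}^{\reg}$: this suffices because $X\subset G_{(d)}\cap D$ for some $d$ by \cite[1.16(e)]{lusztig2020strata}, orbit dimensions cannot increase under taking closures, and the locus in $\overline X$ where the orbit dimension is strictly less than $d$ is closed by semicontinuity, so $\overline X^{\reg}$ is open in the closed set $\overline X$ and in particular locally closed in $D$. The inclusion $X\subset\overline X^{\reg}$ is automatic; the content is the reverse inclusion, equivalent to showing that $E$ is constant on $\overline X^{\reg}$. Combining Proposition \ref{finite union of jordan classes} with the finiteness of the set of Jordan classes in $D$, one may write $X=J_1\cup\cdots\cup J_k$ and then $\overline X^{\reg}=\bigcup_i\overline{J_i}^{\reg}$, since each $J_i$ sits in $G_{(d)}$. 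Hence the task boils down to proving: if $J_1\leq J_2$ in the poset of Subsection \ref{poset}, then $E(J_1)=E(J_2)$.

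Pick representatives $g_2=su\in J_2$ and $g_1=tv\in J_1$. After a $G^\circ$-conjugation, the proposition in Subsection \ref{poset} provides $z\in T(su)$ with $t=zs$, $v\in\Ind_{G^{s\circ}u}^{G^{zs\circ}u}(G^{s\circ}\cdot u)$, together with the containment $G^{s\circ}\subset G^{zs\circ}$. Unwinding the recursive definition of $E$, one obtains
\begin{equation*}
E(su)=\mathbf{j}_{W(G^{s\circ})^{D'}}^{W(G^\circ)^D}E'(u),\qquad E(tv)=\mathbf{j}_{W(G^{zs\circ})^{\widetilde D}}^{W(G^\circ)^D}E''(v),
\end{equation*}
where $D'$ is the component of $G^s$ containing $su$, $\widetilde D$ is the component of $G^{zs}$ containing $tv$, and $E'(u)$, $E''(v)$ are the corresponding invariants attached to $u$ and $v$ inside $C_G(s)$ and $C_G(zs)$ respectively. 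The crucial step is the identity
\begin{equation*}
E''(v)=\mathbf{j}_{W(G^{s\circ})^{D'}}^{W(G^{zs\circ})^{\widetilde D}}E'(u),
\end{equation*}
which, using the equality $\widetilde{D'_u}=D'_u$ noted just before the statement, reduces to the classical fact that the Springer representation attached to a unipotent induced orbit is the truncated induction of the Springer representation attached to the inducing orbit. Transitivity of $\mathbf{j}$ \cite{LS} then yields $E(tv)=E(su)$.

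The main obstacle is precisely the crucial identity for $E''(v)$. In the connected case treated in \cite{carno} it follows directly from Lusztig--Spaltenstein; in the disconnected setting one must first invoke Proposition \ref{unipotentinduction} to translate $v\in\Ind_{D_L}^D(L\cdot su)$ into a unipotent induction inside the connected reductive group $G^{zs\circ}$, and then verify the equivariance of truncated induction with respect to the automorphisms $[\widetilde D]$ of $W(G^{zs\circ})$ and $[D']$ of $W(G^{s\circ})$ that arise from the ambient disconnected structure. Modulo this compatibility, the rest of the argument is formal bookkeeping.
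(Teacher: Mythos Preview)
Your argument follows the same route as the paper: reduce to showing that $E$ is constant on $\overline{J(su)}^{\reg}$, use the description of the regular closure via induced orbits (Proposition~\ref{reg}(b)), unwind the recursive definition of $E$, and conclude by transitivity of $\mathbf j$-induction. The paper's proof is structured identically, with the same ``crucial identity'' for $E(v)$ as the only nontrivial step.

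The one place where you stop short is exactly where the paper supplies the missing ingredient. You write ``modulo this compatibility, the rest of the argument is formal bookkeeping,'' leaving the identity $E''(v)=\mathbf j\,E'(u)$ unproved. The paper fills this gap by citing two specific facts: first, \cite[1.9(iii)]{lusztig2020strata} guarantees that $E(u)$ is \emph{good}, so that $\mathbf j$-induction from $W(G^{s\circ})^{\widetilde{D'_u}}$ to $W(G^{zs\circ})^{\widetilde{D_u}}$ lands in $\Irr$; second, \cite[Corollary~2.10]{Sorlin} (Malle--Sorlin) is precisely the disconnected analogue of the Lusztig--Spaltenstein compatibility between unipotent induction and truncated induction. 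Note in particular that the induction $\Ind_{G^{s\circ}u}^{G^{zs\circ}u}(G^{s\circ}\cdot u)$ takes place between connected components of the possibly disconnected group $G^{zs}$, not inside the connected group $G^{zs\circ}$ as your phrasing suggests; so the ``classical fact'' from \cite{LS} does not apply directly, and the Malle--Sorlin result is genuinely needed. With these two citations in hand, your outline becomes a complete proof identical to the paper's.
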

		\begin{proof}
		 
			We first prove  that if $J(su)\subset X$ then $\overline{J(su)}^{\text{reg}}\subset X$. Let $x\in\overline{J(su)}^{\text{reg}}$. We show that $E(x)=E(su)$. With notation as before 
		
			$\exists z\in T(su)$ and $v\in \Ind_{ \widetilde{D'_u}}^{\widetilde{D_u}}(G^{s\circ}\cdot u)$ such that $x=zsv$.

			By \cite[1.9 (iii)]{lusztig2020strata} $E(u)$ is good, so we can apply $\textbf{j}$-induction from $W(G^{s\circ})^{\widetilde{D'_u}}$ to $W(G^{zs\circ})^{\widetilde{D_u}}$ obtaining an irreducible representation. By \cite[Corollary 2.10]{Sorlin}, $$E(v)=\textbf{j}_{W(G^{s\circ})^{\widetilde{D'_u}}}^{W(G^{zs\circ})^{\widetilde{D_u}}}E(u).$$
			
			By definition of $E$, $$E(x)=\textbf{j}_{W(G^{zs\circ})^{\widetilde{D}}}^{W(G^{\circ})^D}E(v).$$
			
			So putting together these relations and using transitivity of the truncated induction, we have
			$$
			E(x)=\textbf{j}_{W(G^{zs\circ})^{\widetilde{D}}}^{W(G^{\circ})^D}E(v)=\textbf{j}_{W(G^{zs\circ})^{\widetilde{D}}}^{W(G^{\circ})^D}\textbf{j}_{W(G^{s\circ})^{\widetilde{D'_u}}}^{W(G^{zs\circ})^{\widetilde{D_u}}}E(u)=\textbf{j}_{W(G^{s\circ})^{\widetilde{D'_u}}}^{W(G^{\circ})^D}E(u).
			$$
			
			By definition of $E$, there holds $E(su)=\textbf{j}_{W(G^{s\circ})^{D'}}^{W(G^{\circ})^D}E(u)$, so $E(x)=E(su)$ and $x\in X$.
			
			By Proposition \ref{finite union of jordan classes} $X$ is a finite union of Jordan classes (\cite{lusztig2020strata}), therefore $X$ is a finite union of regular closures of Jordan classes. Let $I$ be the set parametrizing them and let $X\subset G_{(d)}$. Then
			$$
			X=\bigcup_{i\in I} \overline{J(g_i)}^{\text{reg}}=\overline{\bigcup_{i\in I}J(g_i)}\cap G_{(d)}=\left(\overline{\bigcup_{i\in I}J(g_i)}\right)\cap \left(\bigcup_{i> d-1}G_{(i)}\right),
			$$
			is the intersection of a closed subset and an open subset of $D$, so $X$ is locally closed.
		
		\end{proof}
	\end{section}

	\begin{section}{Examples}
		Let $\mathbb{K}$ be an algebraic closed field of characteristic $2$.
		In this section we will analize the Jordan classes in the semidirect product of $SL(n)$ with the group generated by a non-inner automorphism of $SL(n)$.

		Let $\tau$ be the involution of $SL(n)$, defined as follows
		$$
		\tau :SL(n)\longrightarrow SL(n)
		$$
		$$
		X\mapsto J\ ^tX^{-1}J,
		$$
		with $J=\begin{pmatrix}
			&&1 \\ & \iddots & \\ 1&&\end{pmatrix}$, and $^tX^{-1}$ the transposed of the inverse of $X$.\\
		
		Let $T=\left\{\begin{pmatrix}*&&\\&\ddots&\\&&*\end{pmatrix}\right\}$ be the maximal torus in $SL(n)$ consisting of diagonal matrices, and $B=\left\{\begin{pmatrix}*&\cdots&*\\&\ddots&\vdots\\&&*\end{pmatrix}\right\}\subset SL(n)$ the Borel subgroup of upper triangular matrices. Let $U$ be the subgroup of unipotent elements in $B$. \\
		Then $\tau$ preserves $B,T$ and $U$. So $\tau$ induces an automorphism of the Dynkin diagram of $SL(n)$.
		We consider the non connected algebraic group $G=SL(n)\rtimes \langle\tau\rangle$, so $G^{\circ}=SL(n)$.\\
		Since $|\tau|=\charact\mathbb{K}$, the element $\tau$ is a unipotent element of $G$. Furthermore the automorphism $\tau$ is such that $\tau T=T$ and $\tau B=B$, so it is a quasi semisimple element (\cite[1.4]{charactersheafI}). Then we can apply the result in \cite[8.2 pp 52]{steinberg}, and we obtain that $G^{\tau}$ is connected and reductive. In \cite[Theorem 1.2 (iii)]{outerunipotent} there is the description of the reductive part of the centralizer of a unipotent element. Using this result and the fact that $G^{\tau}$ is reductive, we can conclude that  $C_{G^{\circ}}(\tau)=(G^{\circ})^{\tau}=Sp(2\lfloor\frac{n}{2}\rfloor)$.

		We analize the Jordan classes of elements of $G^{\circ}\tau$.

		\begin{remark}\label{semisimpletau}
			Let $H$ be an algebraic group in a field of characteristic $p$, let $\sigma$ be an automorphism of $H^{\circ}$. Then, by \cite[pp.51]{steinberg}, $\sigma$ is semisimple if and only if there exists an integer $l$ coprime with $p$ such that $\sigma^{l}$ is a semisimple inner automorphism of $H^{\circ}$, i.e., the action of $\sigma^{l}$ is conjugation by a semisimple element of $H^{\circ}$.\\
			Therefore in $G^{\circ}\tau$ there are no semisimple elements.
		\end{remark}
		
		\begin{remark}
			
			By \cite[Lemma 5]{twistedspringer}, every element in $G^{\circ}\tau$ is $G^{\circ}$-conjugated to an element $tu\tau$ with $t\in T^{\tau}$ and $u\in U$.

			Let $B'=T^{\tau}\ltimes U$ and let $h\in B'\tau$. By Remark \ref{semisimpletau}, $h_s\in B'$ and $h_u=h_u'\tau\in B'\tau$.
			Observe that $B'$ is a solvable group. Then (\cite[19.3]{humphreys}), all maximal tori in $B'$ are conjugated by elements in $U$. Hence, there exists $v\in U$ such that $vh_sv^{-1}=z\in T^{\tau}$. 
			Thus $vhv^{-1}=vh_sv^{-1}vh_u'\tau( v^{-1})\tau=zvh_u'\tau(v^{-1})\tau$. So, up to conjugation, every element  $x\in G^{\circ}\tau$ is of the form $tu\tau$ with $t\in T^{\tau}$ and $tu=ut$.
		\end{remark}	
		
		Let $x=tu\tau \in B'\tau$, with $t\in T^{\tau}$ and $u\in U$, with $tu=ut$, and let $k=\lfloor\frac{n}{2}\rfloor$.\\
		
		An element in $T^{\tau}$ is of the form: 
		\begin{itemize}
			\item  if $n$ is even, then \begin{equation}\label{form of t}
				t=\diag(a_1,\dots,a_k,a_k^{-1},\dots,a_1^{-1}), \text{ for } a_1,\dots,a_k\in \mathbb{K}^{\star},
			\end{equation}

			\item	 if $n$ is odd, then 	\begin{equation}\label{form of t 2}
				t=\diag(a_1,\dots,a_k,1,a_k^{-1},\dots,a_1^{-1}), \text{ for } a_1,\dots,a_k\in \mathbb{K}^{\star}.
			\end{equation}

		\end{itemize}
		
		Observe that $t$ is regular if and only if $a_i\neq a_j$, $a_i\neq a_j^{-1}$ and $a_i\neq 1 $ for all $i\neq j\in \{1,\dots,k\}$.
		So, if $t$ is regular, the condition $tu=ut$ implies $u=\Id$, and $x=t\tau$.\\

		We describe the Jordan class $J(t\tau)$ for $t$ a regular semisimple element of $G$ contained in $T^{\tau}$.\\
		It is well known that in this case $C_G(t)^{\circ}=T$. So

		Therefore $T(t\tau)=(T\cap C_G(\tau))^{\circ}=(T^{\tau})^{\circ}=T^{\tau}$. 
		So
		$z\in T(t\tau)^{\reg}$ is of the form
		
		\begin{itemize}
			\item	if $n=2k+1$, 
			$$
			z=\diag(z_1,\dots,z_k,1,z_k^{-1},\dots,z_1^{-1}) \ \text{where } z_i\neq0, \  z_i\neq z_j^{\pm1} \ \forall i,j\in\{1,\dots,k\},
			$$
			\item if $n=2k$,
			$$
			z=\diag(z_1,\dots,z_k,z_k^{-1},\dots,z_1^{-1}) \ \text{where } z_i\neq0, \  z_i\neq z_j^{\pm1} \ \forall i,j\in\{1,\dots,k\}.
			$$
		\end{itemize}
		Then $J(t\tau)=SL(n)\cdot (T(t\tau)^{\reg}\tau)$.\\

		Now we consider $x=tu\tau\in B'\tau$ with $tu=ut$, $t$ not regular, and $t$ as in (\ref{form of t}) or (\ref{form of t 2}).\\
		Then there exist coefficients of $t$ that coincide.
		We can assume that the coinciding elements are among $a_1,\dots a_k$, because the other cases are conjugated to this one by a monomial matrix in $(G^{\circ})^{\tau}$, so they produce the same Jordan class.\\
		So suppose that there exist $a_1,\dots,a_{r-1}\in \mathbb{K}^{\star}$ such that $a_i\neq a_j^{\pm 1}$ for all $i,j=1,\dots,r$, and, up to conjugation, 
		$$t=
		\begin{pmatrix}
			a_1\Id_{h_1}&&&&&&\\&\ddots&&&&&\\&&a_{r-1}\Id_{h_{r-1}}&&&&\\&&&\Id_{h_{r}}&&&&\\&&&&a_{r-1}^{-1}\Id_{h_{r-1}}&&\\&&&&&\ddots&\\&&&&&&a_1^{-1}\Id_{h_1}
		\end{pmatrix},
		$$
		with $n$ even, $h_i\in \mathbb{N}$ and $h_r\geq0$ even, and 
		$$t=
		\begin{pmatrix}
			a_1\Id_{h_1}&&&&&&\\&\ddots&&&&&\\&&a_{r-1}\Id_{h_{r-1}}&&&&\\&&&\Id_{h_r}&&&\\&&&&a_{r-1}^{-1}\Id_{h_{r-1}}&&\\&&&&&\ddots&\\&&&&&&a_1^{-1}\Id_{h_1}
		\end{pmatrix},
		$$
		with $n$ odd, $h_i\in \mathbb{N}$ and $h_r\geq1$ odd, 
		where $\Id_{h_i}$ is the $h_i\times h_i$-identity matrix.\\
		Let $z\in T(x)$. By \cite[Theorem 1.8]{dignemichel}, the subgroup $T\cap C_G(t)^{\circ}$ is a maximal torus of $C_G(t)^{\circ}$. This implies $Z(C_G(t)^{\circ})^{\circ}\subseteq T$. So $z\in T\cap C_G(u\tau)$. Since
		$zu\tau(z^{-1})=u\in U
		$ and $z\in T$, looking at the projection from $TU$ to $T$, we see that $z\in T^{\tau}$.

		Furthermore, from the condition $z\in Z(C_G(t)^{\circ})^{\circ}$ there exist $z_1,\dots,z_{r-1}\in \mathbb{K}^*$ such that 
		$$
		z=\diag(z_1\Id_{h_1},\dots, z_{r-1}\Id_{h_{r-1}},\Id_{h_r},z_{r-1}^{-1}\Id_{h_{r-1}},\dots, z_1^{-1}\Id_{h_1}),
		$$
		if n is even, where $h_i\in \mathbb{N}$ and  $h_r\geq 0$ even, and 
		$$
		z=\diag(z_1\Id_{h_1},\dots, z_{r-1}\Id_{h_{r-1}},\Id_{h_{r}},z_{r-1}^{-1}\Id_{h_{r-1}},\dots, z_1^{-1}\Id_{h_1}),
		$$
		if n is odd, where $h_i\in \mathbb{N}$ and $h_r\geq 1$ odd.\\
		Therefore, looking at $(T(x)t)^{\reg}$, for $n$ even 
		$$
		z \text{ is such that } \ z_i\neq z_j^{\pm 1} \text{ if } i\neq j, \ z_i\neq 1, \ h_i\in \mathbb{N}, \ h_r\geq0 \text{ even }, $$
		and for $n$ odd 
		$$
		z  \text{ is such that } z_i\neq z_j^{\pm 1} \text{ if } i\neq j, \ z_i\neq 1,  \ h_i\in \mathbb{N}, \ h_r\geq1 \text{ odd }. $$
		Then 
		$J(x)=SL(n)\cdot ((T(x)t)^{\reg}u\tau)$.
		
		Lastly, as we saw in Remark \ref{unipotent jordan class}, the class $J(u\tau)$ with $u\tau$ unipotent is the orbit $G^{\circ}\cdot u\tau$.

		\subsection{The case $n=3$}
		
		In this section we analize the case $n=3$ and compute explicitly the poset of Jordan classes with respect to the partial order defined in Subsection \ref{poset}. \\

		Every element $x'\in SL(3)\tau$ is $SL(3)$-conjugate to an element $x=tu\tau$ with $t\in T^{\tau}$ and $u\in U$ such that $tu=ut$. 
		Observe that a non-trivial element in $T^{\tau}$ is always regular when $n=3$. Since if $t$ is regular then $u=\Id$, we have only two possibilities for $x$, namely
		$x=t\tau$ or $x=u\tau$.

		Let $x=t\tau$ with $t\in T^{\tau}$. 
		Then, as we have seen in the general case, 
		\begin{equation}\label{ttau}
			\tag{*}
			J(t\tau)=SL(3)\cdot \left(\left\{\begin{pmatrix}a&&\\&1&\\&&a^{-1} \end{pmatrix} \ \Bigg| \ a\neq 0,1\right\}\tau\right).
		\end{equation}
		
		We analize the isolated strata associated to $J(t\tau)$.\\ The torus $T(t\tau)=T^{\tau}$, so in this case
		$L(t\tau)=C_{SL(3)}(T(t\tau))=T$,
		$
		N_{G}T=T\rtimes \langle \tau \rangle
		$,
		$
		P=B,
		$
		$
		N_GP=B\rtimes \langle \tau \rangle.
		$
		Thus the isolated stratum $S$ of $t\tau$ in $N_GL$ is 
		$$
		S=T(T\cdot t\tau)=T\tau.
		$$
		Let $y\tau=\begin{pmatrix}\alpha&&\\&\beta &\\ &&\alpha^{-1}\beta^{-1}\end{pmatrix}\tau\in T\tau$. Then its Jordan decomposition is
		$$
		(y\tau)_s=\begin{pmatrix}\alpha\sqrt{\beta}&&\\&1&\\&&\alpha^{-1}\sqrt{\beta}^{-1}\end{pmatrix} \text{ and }(y\tau)_u=\begin{pmatrix}\sqrt{\beta}^{-1}&&\\&\beta&\\ &&\sqrt{\beta}^{-1}\end{pmatrix}\tau.
		$$
		Thus the set $S^{*}=\{y\tau\in T\tau \ | \ C_G((y\tau)_s)^{\circ}\subseteq T\}$ is 
		$$	
		\left\{\begin{pmatrix}\alpha&&\\&\beta&\\&&\alpha^{-1}\beta^{-1}\end{pmatrix} \tau \quad \Bigg | \quad \alpha,\beta\in \mathbb{K}^*, \ \alpha\neq\sqrt{\beta}^{-1}\right\}.
		$$
		
		\phantom{aaa}\\
		
		We consider now $x=u\tau$, with $u\in U$.
		By Remark \ref{unipotent jordan class}, the torus $T(u\tau)=\{\Id\}$, so the Jordan class of $x$ is just the $G^{\circ}$-orbit of $u\tau$.
		In this case the Jordan class is isolated.
		
		The following Lemma lists all the possible Jordan classes $J(x)$ in $G\tau$ that are unipotent $G^{\circ}$-orbits.

		\begin{lemma}
			There are only two unipotent $SL(3)$-orbits represented in $U\tau$:\\
			\begin{itemize}
				\item the orbit of $\tau$, 
				\item the orbit $SL(3)\cdot (u_1\tau)$ where $u_1=\begin{pmatrix}
					1&1&0\\0&1&0\\0&0&1
				\end{pmatrix}$.
			\end{itemize}
			
		\end{lemma}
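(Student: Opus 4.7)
The plan is to classify unipotent $SL(3)$-orbits in $U\tau$ by using the invariant $x^2$ for $x = u\tau\in U\tau$: since squaring is $SL(3)$-equivariant, the $SL(3)$-conjugacy class of $(u\tau)^2\in SL(3)$ depends only on the orbit of $u\tau$. For $u = \begin{pmatrix}1 & a & b \\ 0 & 1 & c \\ 0 & 0 & 1\end{pmatrix}$, a direct calculation from the definition of $\tau$ yields $\tau(u) = \begin{pmatrix}1 & c & b+ac \\ 0 & 1 & a \\ 0 & 0 & 1\end{pmatrix}$, and hence
\[
(u\tau)^2 = u\tau(u) = \begin{pmatrix}1 & a+c & a(a+c) \\ 0 & 1 & a+c \\ 0 & 0 & 1\end{pmatrix}.
\]
In characteristic $2$ this equals $\Id$ iff $a+c=0$; otherwise $(u\tau)^2-\Id$ has rank $2$, so $(u\tau)^2$ is regular unipotent in $SL(3)$. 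Since $\tau^2 = \Id$ whereas $u_1$ has $a+c=1$, the representatives $\tau$ and $u_1\tau$ sit in distinct $SL(3)$-orbits, and it remains to show that each of the two cases consists of a single orbit.

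To do this I would conjugate within the Borel subgroup $B' = T^\tau\ltimes U$. For $g\in U$ one has $g(u\tau)g^{-1} = (gu\tau(g)^{-1})\tau \in U\tau$, so the action on the $u$-part is $u\mapsto gu\tau(g)^{-1}$, while conjugation by $t = \diag(\lambda,1,\lambda^{-1})\in T^\tau$ rescales $(a,b,c)\mapsto(\lambda a,\lambda^2 b,\lambda c)$. For $g = \begin{pmatrix}1 & x & y \\ 0 & 1 & z \\ 0 & 0 & 1\end{pmatrix}$ the formula for $\tau$ gives $\tau(g)^{-1} = \begin{pmatrix}1 & z & y \\ 0 & 1 & x \\ 0 & 0 & 1\end{pmatrix}$ and
\[
g\tau(g)^{-1} = \begin{pmatrix}1 & x+z & x^2 \\ 0 & 1 & x+z \\ 0 & 0 & 1\end{pmatrix}.
\]
Since $\mathbb{K}$ is algebraically closed of characteristic $2$, every element is a square, so letting $x,z$ vary freely shows that $U\cdot\tau$ exhausts every $u\tau$ with $a=c$, i.e.\ every element of the first case.

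The analogous calculation gives $gu_1\tau(g)^{-1} = \begin{pmatrix}1 & x+z+1 & x^2+x \\ 0 & 1 & x+z \\ 0 & 0 & 1\end{pmatrix}$. The Artin--Schreier map $x\mapsto x^2+x$ is surjective over an algebraically closed field of characteristic $2$, so $b$ is arbitrary, and varying $z$ makes $c$ arbitrary (with $a = c+1$); thus $U\cdot u_1\tau$ covers every $u\tau$ with $a+c=1$. Composing with $T^\tau$, which scales $a+c$ by any nonzero $\lambda$, shows that $T^\tau U\cdot u_1\tau$ covers every $u\tau$ with $a+c\neq 0$. This together with the first case accounts for all of $U\tau$.

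The main obstacle is just careful bookkeeping: deriving the formula for $\tau(u)$, inverting upper unitriangular matrices in characteristic $2$ (which is where the expression $b+ac$ appears), and multiplying out the products without errors. Once these computations are recorded, the classification reduces to the two elementary surjectivity statements over $\mathbb{K}$: every element is a square, and every element lies in the image of Artin--Schreier.
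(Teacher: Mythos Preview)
Your proof is correct and, in fact, a little more careful than the paper's. Both arguments are computational and proceed by reducing to the twisted conjugation action $u\mapsto gu\tau(g)^{-1}$ on $U$, but there are two genuine differences worth noting.

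First, to separate the two orbits the paper computes directly that any $y=x\tau(x^{-1})$ with $x\in SL(3)$ satisfies $y_{1,2}=y_{2,3}$, whereas you use the squaring invariant $(u\tau)^2=u\,\tau(u)$. Your invariant is slightly more conceptual and immediately identifies $a+c$ as the relevant quantity; the paper's condition $y_{1,2}=y_{2,3}$ is of course equivalent when restricted to $U\tau$.

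Second, for the case $a\neq c$ the paper asserts that a conjugating element can be found in $U$. Your computation shows this is not quite right: the sum $a+c$ is preserved by $U$-conjugation (the new parameters are $(a+x+z,\,b+x^2+x(a+c),\,c+x+z)$), so $U$ alone only reaches $a+c=1$ from $u_1\tau$. Your use of the torus $T^{\tau}$ to rescale $a+c$ is therefore necessary, and it cleanly completes the argument. The paper's conclusion is correct, but your route to it is the more accurate one.
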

		\begin{proof}
			Let $y\in G^{\circ}$ be the matrix with entries $y_{i,j}$ and such that $y\tau\in G^{\circ}\cdot\tau$, so there exists $x\in G^{\circ}$ such that $y=x\tau(x^{-1})$. Computing the product $x\tau(x^{-1})$, we find that $y_{1,2}=y_{2,3}$, giving a necessary condition for $y\tau$ to lie in the $G^{\circ}$-orbit of $\tau$. \\
			Let $u_1=\begin{pmatrix}
				1&1&0\\0&1&0\\0&0&1
			\end{pmatrix}$, then $u_1\tau\notin G^{\circ}\cdot \tau$.\\
			
			Let $\overline a=\begin{pmatrix}
				1&a_1&a_2\\0&1&a_3\\0&0&1
			\end{pmatrix}\in U$.
			We distinguish two cases.
			\begin{itemize}
				\item if $a_1=a_3$, it possible to find a matrix $\overline{b}\in U$ such that $\overline b\overline a\tau(\overline b^{-1})=\Id$. So $\overline a\tau \in G^{\circ}\cdot\tau$.
				\item if $a_1\neq a_3$, there exists $\overline b'\in U$ such that $\overline b'\overline a\tau(\overline b'^{-1})=u_1$. Then $\overline a\tau \in G^{\circ}\cdot u_1\tau$.
			\end{itemize}
			Therefore the only unipotent orbits represented in $U\tau$ are $G^{\circ}\cdot\tau$ and $G^{\circ}\cdot u_1\tau$.

		\end{proof}
		The next theorem summarizes the results of this section.
		\begin{teorema}
			The Jordan classes of $SL(3)\rtimes\langle \tau \rangle$ are 
			\begin{enumerate}[-]
				\item $J(t\tau)$ as in (\ref{ttau}) , with $t\in T^{\tau}$, $t\neq \Id$;
				\item $J(u_1\tau)=SL(3)\cdot u_1\tau$, where $u_1=\begin{pmatrix}
					1&1&0\\0&1&0\\0&0&1
				\end{pmatrix}$;
				\item $J(\tau)=SL(3)\cdot \tau$.
			\end{enumerate}
		\end{teorema}
		
		\phantom{aaa}\\
		We analize the poset of the Jordan classes.\\
		By \cite[II 2.21]{Spaltestein}, there exists a unique minimal unipotent quasi-semisimple orbit in $G^{\circ}\tau$, that is $G^{\circ}\cdot \tau$.\\
		Thus we have $\dim G^{\circ}\cdot u_1\tau>\dim G^{\circ}\cdot \tau$. So $\tau\notin G^{\circ}\cdot (\tau U)^{\reg}$, then the unipotent induced orbit $\Ind_{T\tau}^{G^{\circ}\tau}(T\cdot \tau)$ is $G^{\circ}\cdot u_1\tau$. By Proposition \ref{reg}, $\Ind_{T\tau}^{G^{\circ}\tau}(T\cdot \tau)\subset \overline{J(t\tau)}^{\reg}$, so $J(u_1\tau)=G^{\circ}\cdot u_1\tau\subset \overline{J(t\tau)}^{\reg}$. Therefore
		\begin{itemize}
			\item $
			\overline{J(t\tau)}^{\reg}=J(t\tau)\cup J(u_1\tau)$ and $
			\overline{J(t\tau)}=J(t\tau)\cup J(u_1\tau)\cup J(\tau)=G^{\circ}\tau;
			$
			\item $\overline{J(u_1\tau)}^{\reg}=J(u_1\tau)=G^{\circ}\cdot u_1\tau$;
			\item $\overline{J(\tau)}^{\reg}=J(\tau)=G^{\circ}\cdot \tau$.
		\end{itemize}
		Then the only confrontable elements are $J(u_1\tau)\leq J(t\tau)$.\\
		
		Now we look at the Lusztig strata in $G$. By Theorem \ref{strata} any stratum is a union of regular closures of Jordan classes whose elements have $G^{\circ}$-orbits of the same dimension. 
		Thus there are two strata 
		$$
		X_1=\overline{J(t\tau)}^{\reg} \text{ and } X_2=\overline{J(\tau)}^{\reg}.
		$$
		The reader can compare this result with the example in \cite[3.4]{lusztig2020strata}.
	\end{section}
	\section*{Acknowledgments}
	It is a pleasure to thank Giovanna Carnovale and Francesco Esposito for proposing the problem and for helpful suggestions. The author would like to thank Mauro Costantini for his help.
	The author acknowledges support by:  DOR2207212/22 ``Algebre di Nichols, algebre di Hopf e gruppi algebrici" and  BIRD203834 ``Grassmannians, flag varieties and their generalizations." funded by the University of Padova. She is a member of the  INdAM group GNSAGA.
	
\end{document}